\newcommand{\ud}{\mathrm{d}}
\newcommand{\e}{\mathrm{e}}
\newcommand{\R}{\mathbb{R}}
\newcommand{\N}{\mathbb{N}}
\newcommand{\E}{\mathcal{E}}
\newcommand{\cO}{\mathcal{O}}
\newcommand{\OP}{\mathbb{P}}
\newcommand{\bn}{{\bf n}}
\newcommand{\cA}{\mathcal{A}}
\newcommand{\cdiv}{\nabla\cdot}
\newcommand{\lt}{\left}
\newcommand{\rt}{\right}
\newcommand{\paro}{\partial\Omega}
\newcommand{\grad}{\nabla}
\begin{document}

\title{The motion of a fluid-rigid disc system at the
zero limit of the rigid disc radius}
\author{ Masoumeh Dashti \and James C. Robinson}
\titlerunning{Limiting motion of a fluid-rigid body system}
%
%
\maketitle
\begin{abstract}
We consider the two-dimensional motion of the coupled system of 
a viscous incompressible fluid and a rigid disc moving with the fluid,
in the whole plane. The fluid motion is described by the Navier-Stokes equations
and the motion of the rigid body by conservation laws of linear and angular momentum.
We show that,
assuming that the rigid disc is not allowed to rotate, 
as the 
radius of the disc goes to zero, the solution of this 
system converges, in an appropriate sense, to the solution of 
the Navier-Stokes equations describing the motion of only fluid 
in the whole plane. We also prove that the trajectory of the 
centre of the disc, at the zero limit of its radius, coincides 
with a fluid particle trajectory.

\end{abstract}


\section{Introduction}
The use of rigid tracers for finding the Lagrangian paths of the 
fluid flow is based on the assumption that at the zero limit of 
the rigid body radius, the trajectory of the rigid tracer coincides 
with a fluid particle trajectory. 
Studying the validity of this assumption has been the motivation for the problem that 
we address in this paper.
We consider the system of one rigid disc moving with the fluid flow in $\R^2$,
assume the disc does not rotate around its centre,
and show the convergence of the trajectory of the centre of the disc to a fluid
particle trajectory.

We consider the two-dimensional domain $\R^2$ occupied by an incompressible fluid of density $1$ and viscosity $\nu$ and a rigid disc of radius $r$ and density $\rho$, which here we set it equal to the density of the fluid. We assume that the motion of the fluid is modelled by the Navier-Stokes equations
with no-slip condition on the boundary of the rigid disc and consider the rigid body motion to be described
by the equations of the balance of linear and angular momentum: 
\begin{align}
\frac{\partial U}{\partial t} - \nu\Delta U+(U\cdot\nabla)U+\nabla P=F
                         &,\quad x\in\R^2\setminus B(t)\label{fluid-newton0}\\
\nabla\cdot U=0&,\quad x\in\R^2\setminus B(t)\label{div-free0}\\
U=\dot{h}(t)+\omega(t)(x-h(t))^{\bot}
                         &,\quad x\in \partial B(t)\label{fluid-bdry0}\\
M\ddot{h}(t)=-\int_{\partial B(t)}\Sigma \bn\,\ud \Gamma+M\xi&,\label{rigid-newton0}\\
J\dot\omega(t)=-\int_{\partial B(t)} (x-h(t))^{\bot}\cdot\Sigma \bn\,\ud\Gamma+J\eta&,\label{rigid-momentum0}\\
U(x,0)=U_0(x)&,\quad x\in\mathbb{R}^2\setminus B(0),\label{fluid-ic0}\\
h(0)=(0,0),\quad \dot{h}(0)=\dot{h}_0\in\R^2, \quad \omega(0)=\omega_0\in\R\label{rigid-ic0}.
\end{align}
In the above system $t\in[0,T]$, $B(t)$ is the region occupied by the disc at time $t$, $U$ is the velocity vector of the fluid, $P$ the pressure scalar field, $h(t)$ the position of the centre of the rigid disc at time $t$, $\omega$ its angular velocity and 
$M=\rho\pi r^2$ and $J=Mr^2/2$ are its mass and moment of inertia respectively. The stress tensor $\Sigma$ is defined as
$$
\Sigma=-P\mathrm{Id}+2\nu D(U),
$$
where
$$
(D(U))_{k,l}=\frac{1}{2}\big(\frac{\partial U_k}{\partial x_l}+\frac{\partial U_l}{\partial x_k}\big).
$$
The body force applied to the fluid is denoted by $F$, and $M\xi$ and $J\eta$ are the force and torque applied to the disc.
The solvability of the above system, and also the equivalent system in a three-dimensional domain, is known.
When the domain of the motion
is the whole of $\R^3$, this is shown by Judakov (1974) and Serre (1987) using Galerkin approximations. 
In the case of a bounded domain, the global existence of 
at least one weak solution is proved 
by Hoffmann \& Starovoitov (1999) for the two-dimensional case 
and by Conca, San Martin \& Tucsnak (2000) and Gunzburger, Lee \& Seregin (2000) for three-dimensional domains
under some constraints on collision of the rigid body and the boundary of the domain. 
The method of Hoffmann \& Starovoitov (1999) is based on approximating the rigid body by a very viscous fluid,
Conca et al (2000) write the system in a coordinate system attached to the body and use a Galerkin method, 
and Gunzburger et al (2000) approximate the system by time-discretized problems.

In their 2004 paper, Takahashi and Tucsnak show that the system of 
a fluid-rigid disc in the whole plane has a unique global strong solution using a contraction mapping approach. 
This result is shown to be true for the case of a bounded two-dimensional domain as well by Takahashi (2003). 
When the motion is in $\R^3$, the existence of a local unique strong solution is proved by Galdi and Silvestre (2002).

In the case of the motion of several rigid bodies in a fluid, 
at least one weak solution is known to exist 
(Desjardins \& Esteban 1999; Desjardins \& Esteban 2000; Feireisl 2002; Grandmont \& Maday 2000; San Martin, Starovoitov \& Tucsnak 2002). 
The uniqueness however is not known.

We also note the papers by Robinson (2004) and Iftimie, Lopes Filho \& Nussenzveig Lopes (2006). In Robinson (2004) a simplified model of the motion of a fluid-particle system
at the limit of zero radius of particles is studied. The simplification is in considering only
momentum exchange between fluid and particle and assuming that the domain of the fluid
does not change in time; and it is shown that in the limit the motion of the fluid is 
described by the standard Navier-Stokes equations. 
Iftimie et al (2006) consider the two-dimensional 
asymptotic motion of a fluid outside a small obstacle as the size of the obstacle tends to zero.
They prove that the limit flow satisfies the Navier-Stokes equations in the full plane.

Knowing that a unique strong solution of (\ref{fluid-newton0})--(\ref{rigid-ic0}) exists for any $r>0$ 
by the result of Takahashi and Tucsnak (2004), in this paper we study the limiting behaviour of the
solution of (\ref{fluid-newton0})--(\ref{rigid-ic0}) as $r\to 0$. 
To do this, we 
do not take into account the torque exerted by the fluid on the particle,
i.e.\ we do not allow the particle to rotate.
Of course, one could incorporate this into a model
with angular momentum by considering $\omega_0=0$ and imposing a torque
$$
J\eta=\int_{\partial B(t)} (x-h(t))^{\bot}\cdot\Sigma \bn\,\ud\Gamma
$$ 
so that
$\dot\omega=0$, but this simply has the effect of removing equation
(\ref{rigid-momentum0}) from all our calculations.
It is of course desirable to study the more general system when the disc can rotate,
but with a nonzero $\omega$ we have not been able to obtain the required uniform 
estimates 
(see Remark \ref{r:omega} in Section \ref{sec:uniform}).
In order to prove the convergence of the velocity field as $r\to 0$ one requires uniform bounds on the fluid velocity field for $r>0$ that are not provided by the method of Takahashi and Tucsnak. Here, we obtain such uniform bounds over a small enough time interval $[0,T]$.
We then show that at the zero limit of $r$, the velocity field 
satisfies the Navier-Stokes equations in the whole plane. 
Then using the properties of the strong solutions of the 
Navier-Stokes equations, we can prove this convergence for all times.

In what follows, in Section \ref{sec:prelim} we recall the result of Takahashi \& Tucsnak (2004) on the existence of strong solutions, and also their functional setting for the problem, which we will follow in this paper. 
In Section \ref{sec:Pr} we prove some of the properties of the functional spaces introduced in Section \ref{sec:prelim}.
In Section \ref{sec:uniform} we show that the solution of the above fluid-rigid system is bounded independent of the radius of the rigid disc, in the time interval $[0,T]$ for a small enough $T$ which is also independent of $r$.
In Section \ref{sec:limiteqn} we show that as $r\to 0$, the above 
fluid rigid system converges to the Navier-Stokes equations in 
the whole of $\R^2$ for all times and the trajectory of the 
centre of the disc converges to a fluid particle trajectory.

\section{Preliminaries}\label{sec:prelim}

In this section, following Serre (1987), Conca et al (2000) and Takahashi and Tucsnak (2004), we
write (\ref{fluid-newton0})--(\ref{rigid-ic0}) with respect to a coordinate system moving with the disc centre which results in the equations of motion of the fluid and rigid body in domains which are fixed with respect to time. 
We will also state and consider the functional setting introduced in Takahashi and Tucsnak (2004) for this problem.

Using the following change of variables
\begin{align*}
y(t)&=x(t)-h(t),\\
u(y,t)&=U(y+h(t),t),\quad p(y,t)=P(y+h(t),t),\\
\sigma(y,t)&=-p(y,t)\mathrm{Id}+2\nu D(u)(y,t)
\end{align*}
the equations of motion can be written as 
\begin{align}
\frac{\partial u}{\partial t} - \nu\Delta u+(u\cdot\nabla)u-(\dot{h}(t)\cdot\nabla)u+\nabla p=0
                         &,\quad y\in\Omega_r\label{fluid-newton}\\
\nabla\cdot u=0&,\quad y\in\Omega_r\label{div-free}\\
u=\dot{h}(t)+\omega(t)y^{\bot}
                         &,\quad y\in\paro_r\label{fluid-bdry}\\
M\ddot{h}(t)=-\int_{\partial B_r}\sigma \bn\,\ud \Gamma&,\label{rigid-newton}\\
J\dot\omega(t)=-\int_{\partial B_r} y^{\bot}\cdot\sigma \bn\,\ud\Gamma
                          +J\eta&,\label{rigid-momentum}\\
u(y,0)=u_0(y)&,\quad y\in\Omega_r,\label{fluid-ic}\\
h(0)=(0,0),\quad \dot{h}(0)=\dot{h}_0\in\R^2, \quad \omega(0)=\omega_0\in\R\label{rigid-ic}.
\end{align}
with $B_r=B(0)$ and $\Omega_r=\R^2\setminus B_r$.\\

\indent We define
\begin{align}
H_r&=\{u\in[L^2(\R^2)]^2:\nabla\cdot u=0 \quad\mbox{in }\R^2,
              \quad D(u)=0\quad\mbox{in } B_r\}\label{H_r}\\
V_r&=\{u\in[H^1(\R^2)]^2:\nabla\cdot u=0 \quad\mbox{in }\R^2,
              \quad D(u)=0\quad\mbox{in } B_r\}\label{V_r}
\end{align}
and consider the inner product on $H_r$ to be
$$
(u,v)=\int_{\Omega_r}(u\cdot v)\,\ud y+\rho\int_{B_r}(u\cdot v)\,\ud y
$$
which is equivalent to the standard inner product on $[L^2(\R^2)]^2$. 
We denote the orthogonal projection of $[L^2(\R^2)]^2$ onto $H_r$ by $\OP_r$.

\indent We let
\begin{align}
D(A_r)=\Big\{u\in[H^1(\R^2)]^2:\,
              &u\in[H^2(\Omega_r)]^2,\nonumber\\
              &\nabla\cdot u=0 \quad\mbox{in }\R^2,
              \quad D(u)=0\quad\mbox{in } B_r\Big\}\label{D(A_r)}
\end{align}
and for any $u\in D(A_r)$ define
\begin{align*}
\cA_r u=\left\{ 
  \begin{array}{l}
     -\nu\Delta u\quad\mbox{in }\Omega_r\\
     \frac{2\nu}{M}\int_{\partial B}D(u)\bn\,\ud y
         +\big[\frac{2\nu}{J}\int_{\partial B_r}y^{\bot}\cdot D(u)\bn\,\ud y\big]y^{\bot}
                                  \quad\mbox{in }B_r
  \end{array}\right.
\end{align*}
and 
\begin{equation}\label{eq:Ar}
A_r u=\OP_r \mathcal{A}_r u.
\end{equation}
It can be shown (Takahashi \& Tucsnak 2004) that $A_r$ is self adjoint:
$$
(A_r u,v)_{L^2(\R^2)}=2\nu\int_\Omega D(u):D(v)\,\ud y=(u,A_r v)_{L^2(\R^2)}.
$$
We can extend $u\in L^2(\Omega_r)$ to a function in $H_r$, defined on the whole of $\R^2$, by letting
$$
u(y)=\dot{h}(t)+\omega y^{\bot}, \quad \mbox{in } B_r.  
$$
We define
$$
\beta(u-\dot{h},u)=\lt\{
\begin{array}{ll}
\lt( (u-\dot{h})\cdot\nabla \rt)u & \mbox{in } \Omega_r\\
0 & \mbox{in } B_r.
\end{array}
\rt.
$$
Then using $A_r$ and $\OP_r$ defined as above, equations (\ref{fluid-newton})--(\ref{rigid-ic}) can be written as
\begin{align}\label{eq:functional}
\frac{\ud u}{\ud t}+A_r u=
-\OP_r\,\beta(u-\dot{h},u).
\end{align}

Existence of a strong solution of (\ref{fluid-newton})-(\ref{rigid-ic}) is shown by Takahashi \& Tucsnak (2004). 
We recall their result:


\begin{theorem}\label{thm:taktuc} (Takahashi \& Tucsnak 2004)
{\bf i)} Suppose $u_0\in H^1(\Omega_r)$ with $\cdiv u_0=0$ in $\Omega_r$ and $u_0(y)=\dot{h}_0+\omega_0 y^{\bot}$ on $\partial\Omega_r$. 
Then, there exists a unique strong solution of (\ref{fluid-newton})--(\ref{rigid-ic}), satisfying
\begin{align*}
&u\in L^2(0,T;H^2(\Omega_r))\cap C(0,T;H^1(\Omega_r))\cap H^1(0,T;L^2(\Omega_r)),\\
&p\in L^2(0,T;H^1),\quad h\in H^2(0,T;\R^2),\quad \omega\in H^1(0,T;\R),
\end{align*}
for any $T>0$.

{\bf ii)} Let $u_0\in L^2(\Omega_r)$ with $\cdiv u_0=0$. Then the system (\ref{fluid-newton})--(\ref{rigid-ic}) has a unique weak solution with $u\in L^2(0,T;H^1(\Omega_r))\cap C(0,T;L^2(\Omega_r))$.
 
\end{theorem}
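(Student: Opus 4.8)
The plan is to follow the contraction-mapping scheme of Takahashi \& Tucsnak, treating \eqref{eq:functional} as a semilinear parabolic equation on the Hilbert space $H_r$. The first step is the linear theory for $A_r$. Using integration by parts together with the fact that elements of $H_r$ (resp.\ $V_r$) are rigid motions inside $B_r$, one verifies the identity $(A_r u,v)_{L^2(\R^2)}=2\nu\int_{\Omega_r}D(u):D(v)\,\ud y$ for $u\in D(A_r)$, $v\in V_r$; the boundary integrals in the definition of $\cA_r$ are precisely what is needed for the rigid-body part to fit. This shows $A_r$ is self-adjoint and nonnegative on $H_r$, with $D(A_r^{1/2})=V_r$ and $\|A_r^{1/2}u\|_{H_r}\simeq\|u\|_{V_r}$ (the coercivity on the whole plane following from a Korn-type inequality combined with the rigidity in $B_r$). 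Consequently $-A_r$ generates an analytic semigroup $\e^{-A_r t}$ on $H_r$, with the smoothing estimates $\|A_r^{\alpha}\e^{-A_r t}\|_{\mathcal{L}(H_r)}\lesssim t^{-\alpha}$ and the maximal $L^2$-regularity estimate for the inhomogeneous problem.

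The second step is to reduce the coupled system to a single evolution equation for $u$. The key observation is that $\dot h(t)$ (and $\omega(t)$) are not independent unknowns: since $D(u)=0$ in $B_r$ we have $u=\dot h(t)+\omega(t)y^{\bot}$ there, so $u\mapsto\dot h$ is a bounded linear map $H_r\to\R^2$ (orthogonal projection onto rigid translations in $B_r$). Hence $u\mapsto\OP_r\,\beta(u-\dot h,u)$ is a continuous quadratic map and \eqref{eq:functional} is closed in $u$. Writing Duhamel's formula
\[
u(t)=\e^{-A_r t}u_0-\int_0^t\e^{-A_r(t-s)}\OP_r\,\beta\big(u(s)-\dot h(s),u(s)\big)\,\ud s ,
\]
I would seek a fixed point in $X_T=C([0,T];V_r)\cap L^2(0,T;D(A_r))\cap H^1(0,T;H_r)$, in a ball whose radius is fixed in terms of $\|u_0\|_{V_r}$ (recall the hypothesis puts the extended $u_0$ in $V_r$).

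The third step is the nonlinear estimate. In two dimensions, for $v,w\in V_r$ one has $v\in L^4(\R^2)$ and $\nabla w\in L^2(\R^2)$, so $(v\cdot\nabla)w\in L^{4/3}$, which lies in a space of negative order on which $\e^{-A_rt}$ gains enough regularity; combined with the interpolation inequality $\|u\|_{V_r}\lesssim\|u\|_{H_r}^{1/2}\|u\|_{D(A_r)}^{1/2}$ and the Cauchy--Schwarz inequality in time, this yields $\|\mathcal N(u)-\mathcal N(\tilde u)\|_{X_T}\le C\,T^{\theta}\big(\|u\|_{X_T}+\|\tilde u\|_{X_T}\big)\|u-\tilde u\|_{X_T}$ for some $\theta>0$, where $\mathcal N$ denotes the nonlinear Duhamel term. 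Choosing $T$ small makes the full map a contraction, giving a unique local strong solution in $X_T$; the pressure $p\in L^2(0,T;H^1)$ is then recovered by the de Rham lemma applied to $-\partial_t u+\nu\Delta u-(u-\dot h)\cdot\nabla u$, and $h\in H^2(0,T;\R^2)$, $\omega\in H^1(0,T;\R)$ by integrating the rigid-body relations. Globalization uses the two-dimensionality essentially: testing \eqref{eq:functional} with $u$ gives the energy equality $\tfrac12\tfrac{\ud}{\ud t}\|u\|_{H_r}^2+2\nu\int_{\Omega_r}|D(u)|^2\,\ud y=0$ (the drift term integrates away and $\beta$ vanishes in $B_r$), so $\|u(t)\|_{H_r}$ is nonincreasing; then the enstrophy-type estimate, with the nonlinear term controlled via the 2D Ladyzhenskaya inequality $\|v\|_{L^4}^2\lesssim\|v\|_{L^2}\|v\|_{H^1}$, gives an a priori $V_r$-bound on any interval, ruling out blow-up and extending the solution to all $T>0$. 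For part (ii), a Galerkin or semigroup approximation with the energy estimate produces a weak solution in $C([0,T];H_r)\cap L^2(0,T;V_r)$, and uniqueness follows from an energy estimate on the difference of two solutions, again using the 2D product inequalities.

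I expect the main obstacle to be the nonlinear estimate of the third step carried out in the moving-frame variables: the convective term $(u-\dot h)\cdot\nabla u$ has to be estimated up to the boundary $\paro_r$, where $u$ matches a rigid motion, and one must keep track of the structure (and, for the later sections of the paper, the $r$-dependence) of the constants in the Korn/coercivity inequality, in the trace estimates used to control $\dot h$, and in the smoothing bounds for $\e^{-A_r t}$ — although for this fixed-$r$ existence statement only their finiteness is required.
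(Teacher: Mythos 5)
The paper does not prove this theorem at all: it is quoted as a known result of Takahashi \& Tucsnak (2004), whose proof is precisely the contraction-mapping/semigroup argument you outline, so your proposal follows essentially the same route as the cited source. One small imprecision worth fixing: $\|A_r^{1/2}u\|_{H_r}^2=2\nu\|D(u)\|_{L^2(\Omega_r)}^2$ alone is \emph{not} equivalent to $\|u\|_{V_r}$ on the unbounded domain (it cannot control the $L^2(\R^2)$ norm); what holds, and what the fixed-point argument actually needs, is that the graph norm $\|u\|_{H_r}+\|A_r^{1/2}u\|_{H_r}$ is equivalent to $\|u\|_{V_r}$.
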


The estimates in the result of Takahashi and Tucsnak (2004) depend on $r$ via
$\|\dot{h}\|_{L^\infty(0,T;\R^2)}$ (they use $M|\dot h|^2\le c(1+\|u_0\|_{H^1})$) and the constant $c$ in the inequality $\|\nabla^2 u\|_{L^2(\Omega)}\le c\,\|A_r u_r\|_{L^2}$ used in their proof. 
But here, to study the solution when $r\to 0$, we need uniform bounds on the velocity field.
In Section \ref{sec:uniform} we show that if $\omega(t)=0$ for
$t\ge 0$, one can find such uniform bounds on a time interval $[0,T]$, with $T$ only depending on the initial velocity field $u_0$. But first we prove some of the properties of the orthogonal projection $\OP_r$ which we will need later.


\section{Properties of the orthogonal projection $\OP_r$}\label{sec:Pr}

In this section we prove some results characterizing $\OP_r$, which we will need 
in the following sections. 
In the first lemma we identify the orthogonal complement of $H_r$.
Then we use it to show that the image of a sufficiently regular and divergence-free function under $\OP_r$ converges in $[L^2(\R^2)]^2$ to the function itself as $r\to 0$, 
which will be key in proving the convergence of the fluid-rigid body system (\ref{eq:functional}) 
to the Navier-Stokes equations in the whole plane in Section \ref{sec:limiteqn}.
We note that the results of this section are in the general case where
$\omega(t)$ can be nonzero. 


\begin{lemma}\label{lem:G1G2}

Let
$$
H_r=\{u\in[L^2(\R^2)]^2:\nabla\cdot u=0\;\;\,\mbox{in }\R^2,
              \quad D(u)=0\;\;\,\mbox{in } B_r\},
$$
\begin{align*}
G_1=\{u\in [L^2(\R^2)]^2: u=\nabla q_1,\, \nabla q_1\in [L^2(\R^2)]^2,\, q_1\in L^1_{loc}(\R^2)\},
\end{align*}
\begin{align*}
G_2=\{
&u\in [L^2(\R^2)]^2:\, 
    \cdiv u=0\mbox{ in }\R^2,\,\\
&\; u=\nabla q_2 \mbox{ in } \Omega_r \mbox{ with } \nabla q_2\in [L^2(\Omega_r)]^2,\, q_2\in L^1_{loc}(\Omega_r),\\
&\; u=\phi \mbox{ in } B_r \mbox{ with } \phi\in [L^2(B_r)]^2,\mbox{ and }\int_{B_r} \phi\cdot y^{\bot}\,\ud y=0\}. 
\end{align*}
Then $H_r$, $G_1$ and $G_2$ are mutually orthogonal and 
$$
[L^2(\R^2)]^2=H_r\oplus G_1\oplus G_2\,.
$$
\end{lemma}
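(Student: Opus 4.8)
The plan is to show first that the three spaces are pairwise orthogonal, and then that they span all of $[L^2(\R^2)]^2$. For the orthogonality of $H_r$ and $G_1$: given $u\in H_r$ and $v=\nabla q_1\in G_1$, I would write $(u,v)=\int_{\R^2} u\cdot\nabla q_1$ and integrate by parts, using $\cdiv u=0$ in all of $\R^2$ and the decay of $q_1$ at infinity (which one extracts from $\nabla q_1\in L^2$, modulo a careful truncation argument); there is no boundary term because the divergence-free condition holds across $\partial B_r$. For $H_r\perp G_2$: with $u\in H_r$ and $v\in G_2$, split $(u,v)=\int_{\Omega_r}u\cdot\nabla q_2+\int_{B_r}u\cdot\phi$. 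On $\Omega_r$ integrate by parts to get $\int_{\paro_r}(u\cdot\bn)q_2$ (using $\cdiv u=0$), and on $B_r$ use that $D(u)=0$ there, which forces $u=a+b y^{\bot}$ for constants $a\in\R^2$, $b\in\R$; the constraint $\int_{B_r}\phi\cdot y^{\bot}=0$ kills the $b y^{\bot}$ part, and one checks the constant part $\int_{B_r}a\cdot\phi$ cancels against the remaining boundary term $\int_{\paro_r}(u\cdot\bn)q_2$ by the divergence theorem applied to $v$ on $B_r$ (here $v\cdot\bn$ is continuous across $\paro_r$ since $\cdiv v=0$ globally). For $G_1\perp G_2$: given $\nabla q_1\in G_1$ and $v\in G_2$, compute $(\nabla q_1,v)=\int_{\R^2}\nabla q_1\cdot v=-\int_{\R^2}q_1\,\cdiv v=0$ since $\cdiv v=0$ in $\R^2$ and $q_1$ decays.

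For the decomposition, I would argue that any $f\in[L^2(\R^2)]^2$ can be written as $f=f_1+f_2+f_3$ with $f_1\in G_1$, $f_2\in G_2$, $f_3\in H_r$. The natural route is via an elliptic problem: first remove the ``curl-free part'' over $\Omega_r$ with the right boundary data. Concretely, I would solve a Neumann-type problem in $\Omega_r$ to find $q$ with $\nabla q\in L^2(\Omega_r)$ whose normal trace matches $f\cdot\bn$ on $\paro_r$ and such that $\cdiv(f-\nabla q)=0$ in $\Omega_r$; one then has to reconcile this with a correction inside $B_r$ and with the far-field behaviour. A cleaner organisational device: since $H_r$, $G_1$, $G_2$ are closed subspaces (closedness of $G_1$ and $G_2$ needs a short argument using the $L^2$ control of the gradient plus Poincaré/Sobolev on bounded pieces and a Deny–Lions type space at infinity), and we have shown them mutually orthogonal, it suffices to show that the only $f\in[L^2(\R^2)]^2$ orthogonal to all three is $f=0$. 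If $f\perp G_1$ then $f$ is weakly divergence-free in $\R^2$ (test against $\nabla q_1$ for all smooth compactly supported $q_1$, and then enlarge). If in addition $f\perp G_2$: testing against elements supported in $\Omega_r$ of gradient form shows $f=\nabla\psi$ in $\Omega_r$; testing against the $B_r$-part shows the restriction of $f$ to $B_r$ is orthogonal in $[L^2(B_r)]^2$ to every $\phi$ with $\int_{B_r}\phi\cdot y^\bot=0$, hence $f|_{B_r}=c\,y^\bot$ for some scalar $c$; and the divergence-free gluing across $\paro_r$ then forces the normal trace of $\nabla\psi$ to equal that of $c y^\bot$. Combined with $f\perp H_r$ — which, since $f$ is already divergence-free in $\R^2$, equal to a gradient in $\Omega_r$ and to $c y^\bot$ in $B_r$, pins down the remaining freedom — one deduces $f=0$. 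This orthogonal-complement formulation avoids having to exhibit the decomposition by an explicit construction.

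The main obstacle I anticipate is the analysis at spatial infinity: the spaces $G_1$ and $G_2$ involve potentials $q_i\in L^1_{loc}$ with only $\nabla q_i\in L^2$, so $q_i$ itself need not be in $L^2(\R^2)$, and in two dimensions $\nabla q_i\in L^2$ does \emph{not} by itself guarantee that $q_i$ has a limit at infinity (logarithmic growth is a priori possible). Making the integration-by-parts arguments rigorous — both for orthogonality and for showing closedness of $G_1$ and $G_2$ — therefore requires a careful truncation argument (cutting off on dyadic annuli $R\le|y|\le 2R$ and controlling the error by $\|\nabla q_i\|_{L^2(|y|\ge R)}\to 0$, together with a Poincaré–Wirtinger inequality on the annulus that is scale-invariant in 2D so the constant does not blow up). I expect this to be where most of the real work lies; the interface computations on $\paro_r$ are routine once one knows that $v\cdot\bn$ is well defined and continuous across $\paro_r$ for $v$ with $\cdiv v\in L^2(\R^2)$, which is a standard trace fact.
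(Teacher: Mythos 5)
Your orthogonality argument is essentially the paper's: identifying an $H_r$-element with a rigid motion $a+by^{\bot}$ on $B_r$, using the moment constraint to kill the rotational part, and cancelling the constant part against the boundary term via continuity of the normal trace of the $G_2$-element across $\paro_r$ is exactly how the paper proves $H_r\perp G_2$ (its identity $\int_{B_r}\phi\,\ud y=-\int_{\paro_r}q_2\,\bn\,\ud y$), while $G_1\perp H_r$ and $G_1\perp G_2$ are the classical Helmholtz orthogonality. The genuine gap is in the completeness half, where you replace an explicit construction by the ``trivial orthogonal complement'' argument: two of its steps fail as stated. First, testing against ``elements supported in $\Omega_r$ of gradient form'' cannot give $f=\nabla\psi$ in $\Omega_r$. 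If you mean elements of $G_2$ vanishing in $B_r$, there are no nonzero ones: $\phi=0$ forces $\nabla q_2\cdot\bn=0$ on $\paro_r$, and an exterior harmonic $q_2$ with $L^2$ gradient and zero Neumann data is constant (the $L^2$ condition at infinity kills the growing and logarithmic modes). If you mean gradient test fields, orthogonality only re-proves $\cdiv f=0$. The correct source of ``$f=\nabla\psi$ in $\Omega_r$'' is orthogonality to $H_r$: smooth divergence-free fields compactly supported in $\Omega_r$ lie in $H_r$, and de Rham then gives the potential. Second, the decoupling ``$f\perp G_2$ implies $f|_{B_r}$ is orthogonal to every $\phi$ with $\int_{B_r}\phi\cdot y^{\bot}=0$, hence $f|_{B_r}=c\,y^{\bot}$'' is not valid: global solenoidality ties the $\Omega_r$-part of a $G_2$-element to its $B_r$-part through the matching of normal traces, so you cannot vary $\phi$ while keeping $\nabla q_2=0$ unless $\phi\cdot\bn=0$ on $\paro_r$; and even then the admissible $\phi$ are divergence-free, so their orthogonal complement in $[L^2(B_r)]^2$ contains all gradients and you would only get $f|_{B_r}=\nabla\chi+c\,y^{\bot}$. (A repaired version of this route does exist: once $f=\nabla p$ in $\Omega_r$, pairing $f$ with a compactly supported divergence-free $v\in H_r$ equal to $y^{\bot}$ on a neighbourhood of $\overline{B_r}$, whose normal trace vanishes on $\paro_r$, gives $\int_{B_r}f\cdot y^{\bot}=0$, so $f\in G_2$ and $f\perp G_2$ forces $f=0$ --- but that argument is not in your sketch.)

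Note also that the paper proves completeness constructively, and the point you wave at (``reconcile this with a correction inside $B_r$ and with the far-field behaviour'') is precisely where its work lies: after removing $\nabla q_1$ by a Poisson problem in the plane, the Neumann datum for $q_2$ on $\paro_r$ is $(u-\nabla q_1)\cdot\bn-V_v\cdot\bn$ where the constant $V_v$ itself depends on $q_2$ through $\int_{\paro_r}q_2\,\bn\,\ud y$; the paper resolves this coupling by a variational formulation with a nonlocal boundary term, solved by Lax--Milgram in the homogeneous space $\dot{D}^{1,2}$ (Galdi), then fixes $\omega_v$ by the angular moment of $u$ over $B_r$ and checks that the glued fields are solenoidal across $\paro_r$. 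To make your proposal complete you must either carry out this coupled solve or supply the corrected orthogonal-complement argument (closedness of the subspaces plus the de Rham and moment steps above); as written, the decomposition half does not go through. Your worries about behaviour at infinity are legitimate but secondary; they are handled by the standard characterisation of $G_1$ as the orthogonal complement of the solenoidal fields.
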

\begin{proof} The proof is mostly based on the ideas in the proof of Lemma 4 of Conca et al (2000).
Since $H_r$ and $G_2$ are subsets of the set of square integrable divergence-free functions, 
$G_1$ is perpendicular to both of them
(noting that since we have assumed $\rho=1$, the inner product over the above spaces is the
standard $L^2$-inner product).
To show that $H_r\bot G_2$, we consider 
$$
w=\lt\{ \begin{array}{ll} \nabla q_2 &\mbox{in }\Omega_r\\
                         \phi &\mbox{in }B_r
              \end{array}\rt. \in G_2
$$
and $v\in H_r$ with 
$v(y)=V_v+\omega_v y^{\bot}$ for $y\in B_r$
where 
$V_v\in \R^2$ and $\omega_v\in\R$ are constant.
We have
\begin{align}\label{eq:v.w}
(v,w)&=\int_{\Omega_r}v\cdot\nabla q_2\,\ud y+\int_{B_r}(V_v+\omega_v y^{\bot})\cdot\phi\,\ud y\nonumber\\
     &=V_v\cdot\int_{\partial\Omega_r} q_2\,\bn\,\ud y+\, V_v\cdot\int_{B_r} \phi\,\ud y.
\end{align}
Since $\nabla y$ is the identity matrix and $\phi$ is divergence-free, we can write
$$
\int_{B_r}\phi\,\ud y
= \int_{B_r}\nabla y\,\phi\,\ud y
     =\int_{B_r}\lt( \begin{array}{c} \cdiv(\phi y_1)\\ \cdiv(\phi y_2) \end{array} \rt)\,\ud y
     =\int_{\partial\Omega_r}\phi\cdot (-\bn)y\,\ud y\nonumber
$$
Now since $w\in [L^2(\R^2)]^2$ is divergence-free, the normal components of $\phi$
and $\nabla q_2$ on the boundary, by Theorem 1.2 of Temam 1977, satisfy 
\begin{align}\label{eq:curve-normal}
\int_{\partial\Omega_r}(\phi\cdot\bn-\nabla q_2\cdot\bn)\,g(y)\,\ud y=0
\end{align}
for any $g\in H^{1/2}(\partial\Omega_r)$. 
Therefore, with $g(y)=y$ and since $\Delta q_2=0$, we have
\begin{align*}
\int_{\partial\Omega_r}\phi\cdot (-\bn)y\,\ud y\nonumber
     &=-\int_{\partial\Omega_r}\nabla q_2\cdot \bn\, y\,\ud y\\
     &=-\int_{\Omega_r}
                \lt( \begin{array}{c} \cdiv(\nabla q_2 y_1)\\ \cdiv(\nabla q_2 y_2) \end{array} \rt)\,\ud y\\
     &=-\int_{\Omega_r}\nabla y\,\nabla q_2\,\ud y\\
     &=-\int_{\Omega_r}\partial_j q_2\,\delta_{ij}\,\ud y
        =-\int_{\Omega_r}\partial_j (q_2\,\delta_{ij})\,\ud y\\
     &=-\int_{\paro_r}q_2\,\delta_{ij}\,\bn_j\,\ud y
        =-\int_{\paro_r}q_2\,\bn\,\ud y
\end{align*}
where $i,j=1,2$, $\partial_j=\partial/\partial y_j$ and
$$
\delta_{ij}=\lt\{ \begin{array}{ll} 1,&\mbox{ if } i= j,\\ 0,&\mbox{ otherwise}. \end{array} \rt.
$$
Therefore
\begin{align}\label{eq:iphi}
\int_{B_r}\phi\,\ud y=-\int_{\partial\Omega_r}q_2\,\bn\,\ud y
\end{align}
and by (\ref{eq:v.w}), $(v,w)=0$.

Now we consider an arbitrary $u\in [L^2(\R^2)]^2$ and show that there exist a unique $v\in H_r$, with
\begin{equation}\label{eq:v_B}
v(y)=V_v+\omega_v y^{\bot}\quad\mbox{for } y\in B_r
\end{equation}
where $V_v\in \R^2$ and $\omega_v\in\R$ are constants, and also unique 
$\nabla q_1\in G_1$ and $w\in G_2$ with $w=\nabla q_2$ in $\Omega_r$ and $w=\phi$ in $B_r$, such that
\begin{align*}
u=v+\nabla q_1+w=\left\{\begin{array}{ll} v+\nabla q_1+\nabla q_2&\mbox{ in }\Omega_r,\\
                                    V_v+\omega_v y^{\bot}+\nabla q_1+\phi &\mbox{ in }B_r.
                                       \end{array}  \right.
\end{align*}
For $q_1$ we solve
\begin{align}\label{eq:q1}
\lt\{
\begin{array}{l}
\Delta q_1(y)=\cdiv u(y),\quad y\in\R^2,\\
\nabla q_1\to 0,\quad\mbox{as } |y|\to\infty.
\end{array}\rt.
\end{align}
The above system has a solution $q_1\in L^1_{loc}(\R^2)$ with $\nabla q_1\in L^2(\R^2)$, 
which is unique up to a constant. Clearly, $\nabla q_1\in G_1$.

We set $\phi:B_r\to [L^2(B_r)]^2$ to be
\begin{equation}\label{eq:phi}
\phi=u-\nabla q_1-V_v-\omega_v y^{\bot},
\end{equation}
with $V_v$ and $\omega_v$ as in (\ref{eq:v_B}). Integrating the above equation over $B_r$ and using (\ref{eq:iphi}) we can find $V_v$ in terms of $q_2$:
\begin{align}\label{eq:Vv}
V_v=\frac{1}{\pi r^2}\lt( \int_{B_r} (u-\nabla q_1)\,\ud y+\int_{\partial\Omega_r} q_2\,\bn\ud y \rt).
\end{align}
Now, for $q_2$ we write
\begin{align}\label{eq:q2}
\lt\{
\begin{array}{l}
\Delta q_2=0,\quad\mbox{in }\Omega_r,\\
\nabla q_2\cdot\bn=(u-\nabla q_1)\cdot\bn-V_v\cdot\bn,
                                        \quad\mbox{on }\partial\Omega_r.
\end{array}\rt.
\end{align}
Let 
$$
D^{1,2}=\{q\in L^1_{loc}(\Omega_r): \nabla q\in [L^2(\Omega_r)]^2\},
$$
with the seminorm $\|q\|_{D^{1,2}}=\|\nabla q\|_{L^2(\Omega_r)}$.
For any $\xi\in D^{1,2}$, we have
\begin{align*}
0=(\Delta q_2,\xi)
&=(\nabla q_2,\nabla \xi)+\int_{\partial\Omega_r}\xi\,(u-\nabla q_1)\cdot\bn\,\ud y
                                        -\int_{\partial\Omega_r}\xi\,V_v\cdot\bn\,\ud y\\
&=(\nabla q_2,\nabla \xi)+\int_{\partial\Omega_r}\xi\,(u-\nabla q_1)\cdot\bn\,\ud y\\
&\qquad -\frac{1}{\pi r^2}\lt( \int_{\partial\Omega_r} q_2\,\bn\,\ud y \rt)\lt( \int_{\partial\Omega_r} \xi\,\bn\,\ud y \rt)\\
&\qquad -\frac{1}{\pi r^2}\lt(\int_{B_r}(u-\nabla q_1)\,\ud y\rt) \lt( \int_{\partial\Omega_r} \xi\,\bn\,\ud y \rt)
\end{align*}
and therefore
\begin{align}\label{eq:q2-ip}
&(\nabla q_2,\nabla\xi)
  +\frac{1}{\pi r^2}\lt( \int_{\partial\Omega_r} q_2\,\bn\,\ud y \rt)\lt( \int_{\partial\Omega_r} \xi\,\bn\,\ud y \rt)\nonumber\\
&\qquad=\int_{\partial\Omega_r}\xi(u-\nabla q_1)\cdot\bn\,\ud y
-\frac{1}{\pi r^2}\lt(\int_{B_r}(u-\nabla q_1)\,\ud y\rt) \lt( \int_{\partial\Omega_r} \xi\,\bn\,\ud y \rt) 
\end{align}
Consider $[q]=\{p\in D^{1,2}:p=q+k\;\mbox{ for some }k\in\R\}$ and 
let $\dot{D}^{1,2}$ be the the space of all equivalence classes $[q]$ with $q\in D^{1,2}$. 
Then $\dot{D}^{1,2}$ is a Hilbert space (Galdi 1994, Theorem II.5.1). We now use this fact and
the Lax-Milgram theorem to show the existence of a unique (up to a constant) solution to (\ref{eq:q2-ip}).
Let $R=1+r$ and $B_R$ an open ball of radius $R$ and centre $(0,0)$. 
For any $q\in D^{1,2}$, we have
\begin{align*}
\lt|\int_{\paro_r}q\,\bn\,\ud y\rt|
&\le |\paro_r|^{1/2}\,\|q\|_{L^2(\paro_r)}\\
&\le\,C(r)\,\|q\|_{H^1(\Omega_r\cap B_R)}\\
&\le\,C(r)\,\lt( \|q\|_{L^2(\Omega_r\cap B_R)}+\|\nabla q\|_{L^2(\Omega_r)} \rt)\\
&\le\,C(r)\,\lt( \|\nabla q\|_{L^2(\Omega_r)}
                   +\lt|\int_{B_R}q\,\ud x \rt|+\|\nabla q\|_{L^2(\Omega_r)} \rt)\\
&\le\,C(r)\,\lt( \|\nabla q\|_{L^2(\Omega_r)}+\|q\|_{L^1(B_R)} \rt)
\end{align*}
by the Sobolev trace theorem and the Poincar\'e inequality. 
Therefore the left-hand side of (\ref{eq:q2-ip}) is linear and bounded.
It is also coercive since setting $q_2=\xi$ gives an expression 
bounded below by $\|\nabla \xi\|^2$.
The right-hand side of (\ref{eq:q2-ip}) is a linear functional of $\xi$ as well. It is also bounded because $u-\nabla q_1$ is divergence-free and we can write
\begin{align*}
\lt| \int_{\partial\Omega_r}\xi(u-\nabla q_1)\cdot\bn\,\ud y \,\rt|
&\le \lt|\int_{\Omega_r}\nabla\xi \cdot (u-\nabla q_1)\,\ud y \rt|\\
&\quad\le \,\|u-\nabla q_1\|_{L^2(\R^2)}\,\|\nabla\xi\|_{L^2(\Omega_r)}.
\end{align*}
Therefore by the Lax-Milgram Theorem there exists a unique $q_2\in \dot{D}^{1,2}$
(and thus a unique up to a constant $q_2\in D^{1,2}$) that solves (\ref{eq:q2-ip}).

Having $q_1$ and $q_2$, we can find $v|_{\Omega_r}$ in terms of $u$ by seting 
$$
v=u-\nabla q_1-\nabla q_2\quad\mbox{ in } \Omega_r.
$$
It remains to uniquely define $\omega_v$ (of (\ref{eq:v_B})) and $\phi$ in terms of $u$. We let
\begin{align}\label{eq:omega}
\omega_v=\frac{c}{r^4}\int_{B_r}u\cdot y^{\bot}\,\ud y
\end{align}
(noting that $\int_{B_r}\nabla q_1\cdot y^{\bot}\,\ud y=0$),
and this then gives a unique $\phi$ by (\ref{eq:phi}).

We need to verify that $w\in G_2$ and $v\in H_r$. We have $\cdiv\phi=0$ in $B_r$ and 
$\Delta q_2=0$ in $\Omega_r$. Therefore, for any $\psi\in C_0^\infty(\R^2)$
since $\nabla q_2\cdot\bn=\phi\cdot\bn$, we can write
\begin{align*}
\int_{\R^2} w\nabla\psi\,\ud y
&=\int_{\Omega_r} w\nabla\psi\,\ud y+\int_{B_r} w\nabla\psi\,\ud y\\
&=\int_{\partial\Omega_r}\nabla q_2\cdot\bn\,\psi\,\ud y
     -\int_{\partial\Omega_r}\phi\cdot\bn\,\psi\,\ud y=0
\end{align*}
implying that $\cdiv w=0$ in the sense of distributions.

In showing $v\in H_r$ also, the only condition which is not obvious is $\cdiv v=0$ which,
having $\nabla q_2\cdot\bn=u\cdot\bn-V_v\cdot\bn-\nabla q_1\cdot \bn$, can be verified 
in a similar way to the proof of $\cdiv w=0$. \qed

\end{proof}


Before proceeding to show an appropriate bound on the $L^2$-norm 
of the difference of a divergence-free $H^1$-function and its image under 
$\OP_r$, we need to prove the following lemma which shows how the constant
of the trace inequality for the domain $\Omega_r$ depends on $r$.
We note that in this paper we use the result of this lemma only for $\Omega_r$, whose boundary is a circle, 
but the more general statement seems interesting:

\begin{lemma}\label{lem:trace}
Let $\E_r$ be an exterior domain with a bounded boundary $\partial \E_r$ of diameter $2r$ and having
the uniform $C^1$-regularity property. 

Assume that the radius of curvature of $\partial \E_r$ 
at all points is bounded below by $c_1 r$, with $c_1$ a constant independent of $r$,
and any $x\in \R^2\setminus \E_r$ is contained in an open ball of radius $c_2 r$ with $c_2\le c_1$ 
lying inside $\R^2\setminus \E_r$.

Then for any positive $\alpha<1$ there exists a constant $c$ depending only on $\alpha$ such that for all $u\in H^1(\E_r)$ 
$$
\|u\|_{L^2(\partial \E_r)}\le c\,r^{\alpha/2}\,\|u\|_{H^{1}(\E_r)}.
$$
\end{lemma}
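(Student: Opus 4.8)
The plan is to reduce the estimate on $\E_r$ to the model case of a fixed exterior domain $\E_1$ (say the exterior of the unit ball, or any fixed domain satisfying the hypotheses with $r=1$) by the rescaling $x\mapsto x/r$. Writing $\tilde u(z)=u(rz)$, a change of variables turns $\|u\|_{L^2(\partial\E_r)}^2$ into $r\,\|\tilde u\|_{L^2(\partial\E_1)}^2$, turns $\|u\|_{L^2(\E_r)}^2$ into $r^2\|\tilde u\|_{L^2(\E_1)}^2$, and turns $\|\nabla u\|_{L^2(\E_r)}^2$ into $\|\nabla\tilde u\|_{L^2(\E_1)}^2$ (the scaling being exactly conformally invariant for the gradient term in two dimensions). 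So on the rescaled domain the trace theorem gives $\|\tilde u\|_{L^2(\partial\E_1)}\le c\,\|\tilde u\|_{H^1(\E_1)}$, but interpolating the $L^2(\E_1)$ and $\nabla$ contributions back through the inverse scaling produces exactly the factor $r^{\alpha/2}$ I want, for any $\alpha<1$. The curvature and interior-ball hypotheses are what let me carry this out uniformly: they guarantee that after rescaling the boundary $\partial\E_1$ lies in a fixed compact class of $C^1$ domains, so the trace constant $c$ can be chosen once and for all, independent of which $\E_r$ we started with.

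Concretely I would argue as follows. First, rescale: set $\tilde u(z)=u(rz)$ for $z\in\E_1:=r^{-1}\E_r$, and record the three scaling identities above. Second, apply the ordinary trace inequality on $\E_1$ (which holds by the uniform $C^1$-regularity, with a constant uniform over the admissible family by the curvature/interior-ball conditions): $\|\tilde u\|_{L^2(\partial\E_1)}^2\le c_0\big(\|\tilde u\|_{L^2(\E_1)}^2+\|\nabla\tilde u\|_{L^2(\E_1)}^2\big)$. Third — and this is where the exponent $\alpha$ enters — rather than using the crude inequality above I would use the sharper ``trace with a free parameter'': for any $\delta>0$,
$$
\|\tilde u\|_{L^2(\partial\E_1)}^2\le c_0\big(\delta\,\|\nabla\tilde u\|_{L^2(\E_1)}^2+\delta^{-1}\|\tilde u\|_{L^2(\E_1)}^2\big),
$$
which follows from the divergence-theorem computation $\int_{\partial\E_1}|\tilde u|^2(z\cdot\bn)\,\ud z=\int_{\E_1}\cdiv(|\tilde u|^2 z)\,\ud z$ combined with Young's inequality (using that $z\cdot\bn$ is bounded on $\partial\E_1$ and bounded below on an annular collar by the geometric hypotheses). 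Fourth, translate back by the scaling identities: the left side becomes $r^{-1}\|u\|_{L^2(\partial\E_r)}^2$, and the right side becomes $c_0\big(\delta\,\|\nabla u\|_{L^2(\E_r)}^2+\delta^{-1}r^{-2}\|u\|_{L^2(\E_r)}^2\big)$. Finally, optimize in $\delta$: choosing $\delta=r^{\beta}$ with $0<\beta<1$ balances the two terms up to powers of $r$, and a short bookkeeping check shows $\|u\|_{L^2(\partial\E_r)}^2\le c\,r^{1-\beta}\|u\|_{H^1(\E_r)}^2$ for small $r$; setting $\alpha=1-\beta$ gives the claim for every $\alpha\in(0,1)$.

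The main obstacle is not the scaling arithmetic but making the trace/collar constant genuinely uniform over the family $\{\E_r\}$: one must use the two geometric hypotheses (radius of curvature $\ge c_1 r$, and every exterior point covered by an interior ball of radius $\le c_1 r$) to show that, after rescaling to unit diameter, all the domains $\E_1$ obey a single uniform $C^1$ cone/collar condition, so that the constant $c_0$ and the lower bound on $z\cdot\bn$ over an annular neighbourhood of $\partial\E_1$ can be taken independent of $r$. For the case actually needed in the paper, $\partial\E_r$ is a circle of radius $r$, so $\E_1$ is literally the fixed exterior of the unit disc and this uniformity is immediate — $z\cdot\bn\equiv 1$ there — which makes the whole argument essentially explicit. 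I would present the circle case in detail and remark that the general case follows by the same rescaling once the uniform collar estimate is in hand.
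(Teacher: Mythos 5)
Your reduction by pure dilation cannot produce the factor $r^{\alpha/2}$, and the failure is already visible in your own bookkeeping. Your scaling identities are correct, and undoing the scaling in your parametrized trace inequality on $\E_1$ gives
$\|u\|^2_{L^2(\partial\E_r)}\le c_0\lt(\delta\, r\,\|\nabla u\|^2_{L^2(\E_r)}+\delta^{-1} r^{-1}\|u\|^2_{L^2(\E_r)}\rt)$.
The product of the two coefficients here is $c_0^2$ for every $\delta$, whereas the lemma requires both coefficients to be $O(r^{\alpha})$, i.e.\ a product $O(r^{2\alpha})\to 0$. Hence no choice of $\delta$ works: with your choice $\delta=r^{\beta}$ the zeroth-order term carries the factor $r^{-1-\beta}$, which blows up rather than giving $r^{1-\beta}$, and optimizing over all $\delta>0$ (equivalently, using the multiplicative trace inequality $\|\tilde u\|^2_{L^2(\partial\E_1)}\le c\,\|\tilde u\|_{L^2(\E_1)}\|\nabla\tilde u\|_{L^2(\E_1)}$) returns exactly the $r$-independent estimate $\|u\|^2_{L^2(\partial\E_r)}\le c\,\|u\|_{L^2(\E_r)}\|\nabla u\|_{L^2(\E_r)}$. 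In two dimensions the trace inequality is scale-critical, so any estimate transported from a fixed rescaled domain by dilation alone can never yield a positive power of $r$; the claimed ``short bookkeeping check'' is therefore incorrect, and the defect is structural, not a matter of tracking constants more carefully.

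The missing ingredient is a genuinely non-scale-invariant one, and it is exactly what the paper's proof supplies. There the boundary is covered by $N$ balls $U_j$ of radius $\sim r$ with $N$ independent of $r$; $u$ is extended from $U_j\cap\E_r$ to $\R^2$ with an $r$-independent constant; the local flattening (with Jacobian factors of order $r$ and $r^{-2}$) leaves a bad term $\frac{c}{r}\|E_j[u]\|^2_{L^2(U_j)}$; and this is absorbed by H\"older on the small set $U_j$, $\|E_j[u]\|^2_{L^2(U_j)}\le \|E_j[u]\|^2_{L^{4/(1-\alpha)}(U_j)}\,|U_j|^{(1+\alpha)/2}$ with $|U_j|\sim r^2$, combined with the Sobolev embedding $H^1(\R^2)\subset L^{4/(1-\alpha)}(\R^2)$. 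It is this interplay between the smallness of the boundary collar and the almost-$L^\infty$ embedding of $H^1$ in the plane (applied to an $r$-uniform extension, hence using the full $H^1(\E_r)$ norm) that produces $r^{\alpha}$, and it is also precisely why $\alpha<1$ is required. Your outline contains no step of this kind. A lesser point: your divergence-theorem identity with the vector field $z$ must be localized to a bounded collar of $\partial\E_1$, since $|\tilde u|^2 z$ is not integrable over the unbounded domain; but this is minor compared with the scaling obstruction above.
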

\begin{proof}
Since $\partial \E_r$ is compact, a finite open 
cover $\{U_j\}_{j=1}^N$ of $\partial \E_r$ exists. We consider each $U_j$ to be a ball of
radius $c_2 r/2$ with its centre on $\partial \E_r$ and note that since the diameter of $\E_r$ 
and the radius of the curvature of $\partial \E_r$ are appropriately bounded, $N$ is independent of $r$. 

As in Section 5.21 and 5.22 of Adams (1975)
we define $\Psi_j$ to be the smooth transformation mapping 
$B=\{y\in\R^2:|y|<1\}$ onto $U_j$ such that 
$U_j\cap\partial \E_r=\Psi_j(B_0)$ with $B_0=\{y\in B: y_2=0\}$. Then for $U_j$ as above and
letting $x=\Psi(y)$, since the radius of curvature of $\partial \E_r$ is bounded below by $c_1 r$, we have
$$
|\frac{\partial x_i}{\partial y_k}|\le cr\quad \mbox{for}\quad i,k=1,2
$$
and
$$
|\frac{\partial (y_1,y_2)}{\partial (x_1,x_2)}|\le \frac{c}{r^2},
$$
with $c$ depending on $c_1$ and $c_2$.
For each $U_j\cap \E_r$, $j=1,\dots,N$, there exists an extension operator $E_j$ such that
for any $u\in H^1(\E_r)$,
$$
E_j[u]=u,\quad\mbox{in}\quad U_j\cap \E_r
$$
and 
$$
\|E_j[u]\|_{H^1(\R^2)}\,\le\,K_j\,\|u\|_{H^1(\E_r)}
$$
with $K_j$ independent of $r$ (Adams 1975, Theorem 4.32).
Therefore we can write
\begin{align*}
\int_{\partial \E_r}|u|^2\,\ud x
&\le\sum_{j=1}^N\int_{U_j\cap\partial \E_r}|E_j[u](x)|^2\,\ud x\\
&\le cr\sum_j\int_{-1}^1 |E_j[u](\Psi_j(y))|^2\,\ud y\\
&\le cr\sum_j
    \lt(\|E_j[u]\circ\Psi_j\|_{L^2(B)}^2+\|DE_j[u]\circ\Psi_j\|_{L^2(B)}^2 \rt)\\
&\le \sum_j
    \lt(\frac{c}{r}\|E_j[u]\|_{L^2(U_j)}^2
          +cr\|DE_j[u]\|_{L^2(U_j)}^2 \rt)\\
&\le \sum_j
    \lt(\frac{c}{r}\|E_j[u]\|_{L^{4/(1-\alpha)}(U_j)}^2\,|U_j|^{(1+\alpha)/2}
          +cr\|DE_j[u]\|_{L^2(U_j)}^2 \rt)\\
&\le \sum_j
    \lt(c\,r^{\alpha}\,\|E_j[u]\|_{H^1(\R^2)}^2+cr\|DE_j[u]\|_{L^2(\R^2)}^2\rt)\\
&\le c\,r^{\alpha}\|u\|_{H^1(\E_r)}^2
\end{align*}
noting that $N$ is independent of $r$.
\qed

\end{proof}

In what follows we will also use the Sobolev inequality
$$
\|u\|_{L^{q}(\Omega_r)}\le c\|u\|_{W^{2/p-2/q,p}(\Omega_r)}
$$
where the constant $c$ depends on $p$, $q$ and the cone $C$ determining the cone property of $\Omega_r$
(Adams 1975). Since the cone $C$ for the exterior domain $\Omega_r$ does not depend on $r$, the constant $c$ in the above inequality is independent of $r$. 
\vskip.2cm

In the last lemma of this section we show that if the vector field $u$ is divergence-free and regular enough, its projection under $\OP_r$ is arbitrarily close to $u$ in $L^2(\R^2)$, if $r$ is sufficiently small. 
The result of this lemma is essential in showing the convergence of equations
(\ref{fluid-newton})--(\ref{rigid-ic}) to Navier-Stokes equations.


\begin{lemma}\label{lem:P-Pr}
Let $u\in [H^1(\R^2)]^2$ be divergence-free and assume that $r<1$. Then for any positive $\alpha<1$ there exists a constant $c$ depending on $\alpha$ such that
$$
\|u-\OP_r u\|_{L^2(\R^2)}\,\le\, c\,r^{\alpha/2}\,\|u\|_{H^1(\R^2)}.
$$
\end{lemma}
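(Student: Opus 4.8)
The plan is to exploit that $\OP_r$ is an \emph{orthogonal} projection. Since we have set $\rho=1$, the inner product on $H_r$ from Section \ref{sec:prelim} is the standard one on $[L^2(\R^2)]^2$, so $\OP_r$ is the $L^2(\R^2)$-orthogonal projection onto the closed subspace $H_r$, whence
\[
\|u-\OP_r u\|_{L^2(\R^2)}=\inf_{v\in H_r}\|u-v\|_{L^2(\R^2)}.
\]
It therefore suffices to exhibit a single $v\in H_r$ with $\|u-v\|_{L^2(\R^2)}\le c\,r^{\alpha/2}\|u\|_{H^1(\R^2)}$. The whole difficulty is then reduced to designing a good competitor that actually lies in $H_r$, i.e.\ that is divergence-free on all of $\R^2$ and a rigid motion on $B_r$; the point is that this is easy to arrange via a stream function.

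Indeed, since $u\in[H^1(\R^2)]^2$ is divergence-free it can be written as $u=\nabla^{\bot}\psi$ with $\psi\in H^2_{\mathrm{loc}}(\R^2)$, unique up to an additive constant and with $|\nabla\psi|=|u|$ a.e. Fix $\theta\in C^\infty(\R^2)$ with $\theta\equiv 0$ on $B_r$, $\theta\equiv 1$ on $\R^2\setminus B_{2r}$ and $|\nabla\theta|\le c/r$ (for instance $\theta(y)=\chi(|y|/r)$ for a fixed cutoff $\chi$), let $c_0$ be the average of $\psi$ over the annulus $B_{2r}\setminus B_r$, and set
\[
\tilde\psi:=c_0+\theta\,(\psi-c_0),\qquad v:=\nabla^{\bot}\tilde\psi.
\]
Then $v=u$ outside $B_{2r}$ and $v\equiv 0$ on $B_r$ (where $\tilde\psi$ is constant), while on $B_{2r}$ one has $v=\theta u+(\psi-c_0)\nabla^{\bot}\theta\in[L^2]^2$; moreover $\cdiv v=0$ in $\R^2$ in the sense of distributions simply because $v=\nabla^{\bot}\tilde\psi$ with $\tilde\psi\in L^1_{\mathrm{loc}}$, and $D(v)=0$ on $B_r$ since $v$ vanishes there. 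Hence $v\in H_r$, and the stream-function formulation is exactly what makes the interface condition automatic.

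To estimate $\|u-v\|_{L^2(\R^2)}$, note that $u-v=(1-\theta)u-(\psi-c_0)\nabla^{\bot}\theta$, both terms supported in $\overline{B_{2r}}$, so
\[
\|u-v\|_{L^2(\R^2)}\le \|u\|_{L^2(B_{2r})}+\frac{c}{r}\,\|\psi-c_0\|_{L^2(B_{2r}\setminus B_r)}.
\]
By the Poincar\'e inequality on the annulus $B_{2r}\setminus B_r$, whose constant is $c\,r$ by rescaling to the fixed Lipschitz domain $B_2\setminus B_1$, we get $\|\psi-c_0\|_{L^2(B_{2r}\setminus B_r)}\le c\,r\,\|\nabla\psi\|_{L^2(B_{2r}\setminus B_r)}=c\,r\,\|u\|_{L^2(B_{2r}\setminus B_r)}$, hence $\|u-v\|_{L^2(\R^2)}\le c\,\|u\|_{L^2(B_{2r})}$. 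Finally, by H\"older's inequality and the Sobolev embedding $H^1(\R^2)\hookrightarrow L^q(\R^2)$ with $q=4/(2-\alpha)\in(2,4)$,
\[
\|u\|_{L^2(B_{2r})}\le |B_{2r}|^{\,1/2-1/q}\,\|u\|_{L^q(\R^2)}\le c\,r^{\,1-2/q}\,\|u\|_{H^1(\R^2)}=c\,r^{\alpha/2}\,\|u\|_{H^1(\R^2)},
\]
with $c=c(\alpha)$, which is the claim.

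There is no serious obstacle once the reduction in the first paragraph is made; the only things to watch are the bookkeeping of the $r$-dependence, namely the single factor of $r$ in the Poincar\'e constant on the annulus (obtained by scaling from the fixed unit annulus) and the fact that the Sobolev constant for the fixed finite exponent $q$ depends only on $\alpha$ since the embedding is on all of $\R^2$. An alternative route, closer to the construction in Lemma \ref{lem:G1G2}, would be to estimate directly the two pieces $\nabla q_2$ on $\Omega_r$ and $\phi$ on $B_r$ of $w=u-\OP_r u\in G_2$ using the variational identity (\ref{eq:q2-ip}) and the trace bound of Lemma \ref{lem:trace}; the competitor argument above is preferable because it bypasses the harmonic-extension estimates and the potentially dangerous $r^{-2}$ factors appearing in (\ref{eq:Vv}) and (\ref{eq:omega}).
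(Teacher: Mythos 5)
Your proof is correct, but it takes a genuinely different route from the paper. The paper works directly with the explicit orthogonal decomposition of Lemma \ref{lem:G1G2}: it writes $u-\OP_r u=\nabla q_2$ in $\Omega_r$ and $u-(V_v+\omega_v y^{\bot})$ in $B_r$, estimates $\nabla q_2$ through the variational identity (\ref{eq:q2-ip}), a Poincar\'e bound for $q_2$ on a neighbourhood of $\paro_r$ and a scaled trace inequality in the spirit of Lemma \ref{lem:trace}, and then controls $r|V_v|$ and $r^2|\omega_v|$ from the explicit formulas (\ref{eq:Vv}) and (\ref{eq:omega}). You instead exploit that, with $\rho=1$, $\OP_r$ is the $L^2(\R^2)$-orthogonal projection onto the closed subspace $H_r$, so it suffices to produce one competitor in $H_r$; your stream-function cutoff $v=\nabla^{\bot}\bigl(c_0+\theta(\psi-c_0)\bigr)$ is automatically divergence-free, vanishes (hence is a rigid motion) on $B_r$, and the error reduces, via the scaled Poincar\'e inequality on the annulus and the Sobolev embedding $H^1(\R^2)\hookrightarrow L^{q}(\R^2)$ with $q=4/(2-\alpha)$, to $c\,r^{\alpha/2}\|u\|_{H^1(\R^2)}$, with the constant depending only on $\alpha$ as required. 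Your argument is shorter and more elementary: it bypasses the exterior Neumann problem for $q_2$, the trace lemma, and the $r^{-2}$ factors, and in fact yields the estimate for all exponents $r^{\beta}$ with $\beta<1$. What it does not give is the additional information contained in the paper's proof --- the separate bounds on $\|\nabla q_2\|_{L^2(\Omega_r)}$, on the rigid part in $B_r$, and the boundary estimate (\ref{eq:int-q2}) --- but since those auxiliary estimates are used only inside the paper's own proof and every later application of the lemma invokes only the stated $L^2(\R^2)$ bound, your competitor argument fully suffices for the role the lemma plays in the paper.
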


\begin{proof}
By Lemma \ref{lem:G1G2}, since $u$ is divergence-free $\nabla q_1=0$ and  we have
$$
u=\OP_r u+\nabla q_2\quad \mbox{in }\Omega_r
$$
where $q_2$ satisfies (\ref{eq:q2}).
Letting $\xi=q_2$ in (\ref{eq:q2-ip}), we obtain
\begin{align*}
&\|\nabla q_2\|^2_{L^2(\Omega_r)}+\frac{1}{\pi r^2}\lt( \int_{\partial\Omega_r} q_2\,\bn\,\ud y \rt)^2\\
&\qquad\quad=\int_{\partial\Omega_r}q_2\, u\cdot\bn\,\ud y\nonumber
-\frac{1}{\pi r^2}\lt(\int_{B_r}u\,\ud y\rt) \lt( \int_{\partial\Omega_r} q_2\,\bn\,\ud y \rt)\\
&\qquad\quad\le \int_{\partial\Omega_r}q_2 u\cdot\bn\,\ud y+\frac{1}{2\pi r^2}\lt(\int_{B_r}u\,\ud y\rt)^2
+\frac{1}{2\pi r^2}\lt( \int_{\partial\Omega_r} q_2\,\bn\,\ud y \rt)^2
\end{align*}
and therefore
\begin{align}\label{eq:Dq2}
\|\nabla q_2\|^2_{L^2(\Omega_r)}
&+\frac{1}{2\pi r^2}\lt( \int_{\partial\Omega_r} q_2\,\bn\,\ud y \rt)^2\nonumber\\
&\le\int_{\partial\Omega_r}q_2\, u\cdot\bn\,\ud y
+\frac{1}{2\pi r^2}\lt(\int_{B_r}u\,\ud y\rt)^2.
\end{align}
Dropping the second term in the left-hand side, we can write
\begin{align*}
\|\nabla q_2\|^2_{L^2(\Omega_r)}
&\le \int_{\Omega_r}\nabla q_2\,u\,\ud y+\frac{1}{2\pi r^2}\lt(\|u\|_{L^2(B_r)}\,|B_r|^{1/2}\rt)^2\\
&\le \|\nabla q_2\|_{L^2(\Omega_r)}\,\|u\|_{L^2(\R^2)}+\frac{1}{2}\|u\|_{L^2(B_r)}^2,
\end{align*}
which gives
$$
\|\nabla q_2\|_{L^2(\Omega_r)}\,\le\,c\,\|u\|_{L^2(\R^2)}.
$$

Let 
$$
\cO_r=\{ x\in\Omega_r:\mathrm{dist}(x,\partial\Omega_r)<2 \}.
$$
If $q_2$ solves (\ref{eq:q2}), 
$$
q_2-\frac{1}{|\cO_r|}\int_{\cO_r}q_2\,\ud x
$$ 
is a solution as well 
and therefore we can assume that the average of $q_2$ is zero in $\cO_r$. 
By continuity there exists a line passing through $(0,0)$ and dividing $\cO_r$ to 
two bounded and simply connected domains  
$\cO_r^1$ and $\cO_r^2$ such that the average of $q_2$ is zero on both of them.
Noting that the boundaries of $\cO_r^1$ and $\cO_r^2$ have strong local Lipschitz property,
there exist two extension operators $E_1$ and $E_2$ such that for any $u\in H^1(\cO_r)$, and $i=1,2$ we have
$$
E_i[u]=u,\quad\mbox{in}\quad \cO_r^i,
$$
and 
$$
\|E_i[u]\|_{H^1(\R^2)}\,\le\,K_i\,\|u\|_{H^1(\cO_r^i)},
$$
with $K_i$ independent of $r$ (Adams 1975, Theorem 4.32).
Therefore by Theorem 4.4.4 of Ziemer (1989) we have
$$
\|q_2\|_{L^2(\cO_r^i)}\,\le\, C(|\cO_r^i|)\,\|\nabla q_2\|_{L^2(\cO_r^i)},\quad i=1,2,
$$
which implies that  
\begin{align*}
\|q_2\|_{L^2(\cO_r)}
&\le\,c\,\|\nabla q\|_{L^2(\Omega_r)}\\
&\le\,c\,\|u\|_{L^2(\R^2)}.
\end{align*}
Having this, we go back to (\ref{eq:Dq2}) and write
\begin{align*}
\|\nabla q_2\|^2_{L^2(\Omega_r)}
&+\frac{1}{2\pi r^2}\lt( \int_{\partial\Omega_r} q_2\,\bn\,\ud y \rt)^2\\
&\le \int_{\partial\Omega_r}q_2\, u\cdot\bn\,\ud y
     +\frac{1}{2\pi r^2}\lt(\int_{B_r}u\,\ud y\rt)^2\\
&\le \|q_2\|_{L^2(\partial\Omega_r)}\|u\|_{L^2(\partial\Omega_r)}
     +\frac{1}{2\pi r^2}\|u\|_{L^{2/(1-\alpha)}(B_r)}^2\,|B_r|^{1+\alpha}.
\end{align*}
Arguing similar to the proof of Lemma \ref{lem:trace}, we can show that
$$
\|q_2\|_{L^2(\partial\Omega_r)}\,\le\,c\, r^{\alpha/2}\,\|q_2\|_{H^1(\cO_r)}
$$
and therefore
\begin{align*}
&\|\nabla q_2\|^2_{L^2(\Omega_r)}
+\frac{1}{2\pi r^2}\lt( \int_{\partial\Omega_r} q_2\,\bn\,\ud y \rt)^2\\
&\le c\,r^{\alpha/2}\,\lt( \|q_2\|_{L^2(\cO_r)}+\|\nabla q_2\|_{L^2(\Omega_r)} \rt)\,r^{\alpha/2}\,\|u\|_{H^1(\Omega_r)}
      +c\,r^{2\alpha}\,\|u\|_{H^1(\R^2)}^2\\
&\le c\,r^{\alpha/2}\,\|u\|_{L^2(\R^2)}\,\|u\|_{H^1(\R^2)}+c\,r^{2\alpha}\,\|u\|_{H^1(\R^2)}^2.
\end{align*}
Hence, we obtain
$$
\|u-\OP_r u\|_{L^2(\Omega_r)}=\|\nabla q_2\|_{L^2(\Omega_r)}\,\le\,c\,r^{\alpha/2}\,\|u\|_{H^1(\R^2)},
$$
and
\begin{align}\label{eq:int-q2}
\frac{1}{r}\lt| \int_{\paro_r}q_2\bn\,\ud y \rt|\,\le\,c\,r^{\alpha/2}\,\|u\|_{H^1(\R^2)}.
\end{align}
%

%

To show the bound on $\|u-\OP_r u\|_{L^2(B_r)}$, we note that 
$$
\OP_r u(y)=V_v+\omega_v y^{\bot}
$$ 
for any $y\in B_r$, with $V_v\in\R^2$ and $\omega_v\in\R$
defined as in (\ref{eq:Vv}) and (\ref{eq:omega}). Therefore
\begin{align*}
\|u-\OP_r u\|_{L^2(B_r)}
&=\|u-(V_v+\omega_v y^{\bot})\|_{L^2(B_r)}\\
&\le c\,\|u\|_{L^{2/(1-\alpha)}(B_r)}|B_r|^{\alpha/2}+c\, r|V_v|+c\,r^2|\omega_v|,
\end{align*}
using H\"older inequality. From (\ref{eq:Vv}) and (\ref{eq:int-q2}) we have
\begin{align*}
r|V_v|
&\le \frac{c}{r}\,\|u\|_{L^{2/(1-\alpha)}}\,|B_r|^{(1+\alpha)/2}+\frac{c}{r}\lt| \int_{\paro_r}q_2\bn\,\ud y \rt|\\
&\le c\,r^{\alpha}\|u\|_{H^1(\R^2)}+c\,r^{\alpha/2}\,\|u\|_{H^1(\R^2)}.
\end{align*}
By (\ref{eq:omega}), we can write
$$
r^2|\omega_v|\le \frac{c}{r}\|u\|_{L^{2/(1-\alpha)}}\,r^{1+\alpha}\le c\,r^\alpha\|u\|_{H^1(\R^2)}.
$$
Therefore
\begin{align*}
\|u-\OP_r u\|_{L^2(B_r)}
\le\,c\,r^{\alpha/2}\,\|u\|_{H^1(\R^2)},
\end{align*}
and the result follows.
\qed

\end{proof}


\section{Local uniform bounds on the fluid velocity field}\label{sec:uniform}

Here under the assumption of zero angular velocity 
for the disc for $t\ge 0$, we obtain uniform in $r$ bounds 
on the velocity field.
To impose this assumption in the model (\ref{fluid-newton})--(\ref{rigid-ic}) we set  
\begin{align}\label{e:asp}
\omega_0=0\quad\mbox{ and }\quad J\eta=\int_{\partial B(t)} (x-h(t))^{\bot}\cdot\Sigma \bn\,\ud\Gamma.
\end{align}
We show that for some small enough time interval after the 
initial time, one can bound the solution of (\ref{fluid-newton})--(\ref{rigid-ic}) uniformly in $r$,
in some appropriate spaces. We then prove that these uniform bounds result in strong convergence 
of the solution in an appropriate sense.

To show the uniform bounds, we need the following estimate 
$$
\|\grad^2 u_r\|_{L^2(\Omega_r)}\le c\,\|A_r u_r\|_{L^2(\Omega_r)}
$$
with $c$ independent of $r$. We first prove this estimate.

\begin{lemma}\label{l:Galdi}
Let $u$ be the solution of
\begin{equation}\label{eq:exstokes}
\left\{
\begin{array}{ll}
\Delta u+\nabla p=f, &\mbox{ in }\R^2\setminus B_r\\
\nabla\cdot u=0,               &\mbox{ in }\R^2\setminus B_r\\
u=\dot h, &\mbox{ for }x\in\partial B_r
\end{array}
\right.
\end{equation}
with $f\in L^2(\Omega_r)$ and the constant $\dot h\in\R^2$.
Then, there exists $c>0$ independent of $r$ and $\dot h$ such that
\begin{align}\label{e:D2bnd}
\|\grad^2 u\|_{L^2(\Omega_r)}\le c\,\|f\|_{L^2(\Omega_r)}.
\end{align}
\end{lemma}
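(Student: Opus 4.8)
The plan is to reduce the estimate \eqref{e:D2bnd} to the known interior-regularity and exterior-domain estimates for the Stokes system, tracking carefully that all constants can be chosen independent of $r$. Since \eqref{eq:exstokes} is the Stokes problem in the exterior domain $\Omega_r = \R^2\setminus B_r$ with \emph{constant} Dirichlet data $\dot h$ on $\partial B_r$, I would first remove the boundary data. Set $v = u - \dot h$; then $v$ solves the same Stokes system with zero boundary value on $\partial B_r$, the same $f$, and $v \to -\dot h$ at infinity in the appropriate sense. Because $\grad^2 v = \grad^2 u$, it suffices to bound $\|\grad^2 v\|_{L^2(\Omega_r)}$. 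The natural reference is Galdi's treatment of the exterior Stokes problem (Galdi 1994), which for $n=2$, $f\in L^2$, gives a solution with $\grad^2 v \in L^2$ satisfying $\|\grad^2 v\|_{L^2(\Omega_r)} \le c\,\|f\|_{L^2(\Omega_r)}$; the remaining point is the $r$-dependence of $c$.

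To make the constant uniform in $r$ I would rescale: put $\tilde v(z) = v(rz)$, $\tilde p(z) = r\,p(rz)$, $\tilde f(z) = r^2 f(rz)$, so that $\tilde v,\tilde p$ solve the Stokes system in the \emph{fixed} exterior domain $\Omega_1 = \R^2\setminus B_1$ with right-hand side $\tilde f$ and zero boundary data on $\partial B_1$. For this fixed domain there is a single constant $c_0$ with $\|\grad^2 \tilde v\|_{L^2(\Omega_1)} \le c_0\,\|\tilde f\|_{L^2(\Omega_1)}$. Now one checks the scaling of the two norms: $\|\grad^2\tilde v\|_{L^2(\Omega_1)}^2 = \int_{\Omega_1} r^4 |(\grad^2 v)(rz)|^2\,\ud z = r^{4-2}\int_{\Omega_r}|\grad^2 v|^2 = r^2\|\grad^2 v\|_{L^2(\Omega_r)}^2$, while $\|\tilde f\|_{L^2(\Omega_1)}^2 = \int_{\Omega_1} r^4 |f(rz)|^2\,\ud z = r^2\|f\|_{L^2(\Omega_r)}^2$. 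The two powers of $r$ match, so the inequality descales to $\|\grad^2 v\|_{L^2(\Omega_r)} \le c_0\,\|f\|_{L^2(\Omega_r)}$ with $c_0$ the constant for $\Omega_1$, hence independent of $r$; since $\grad^2 u = \grad^2 v$ this is \eqref{e:D2bnd}. (One should note here that for $n=2$ lower-order terms like $\|\grad v\|_{L^2}$ or $\|v\|_{L^2}$ need not appear and, if they did, would \emph{not} scale compatibly — so it matters that the exterior Stokes estimate in two dimensions controls the top-order derivatives directly by $\|f\|_{L^2}$ alone, via the homogeneous Sobolev space $D^{2,2}$.)

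The main obstacle is precisely justifying that clean two-dimensional exterior Stokes estimate with a constant that is stable under the dilation — i.e.\ that there is a well-defined solution class (the relevant being $v$ with $\grad^2 v\in L^2$, $\grad v$ and $v$ in suitable $L^q$ or homogeneous spaces dictated by the Sobolev embeddings in $\R^2$) in which the solution is unique and the a priori bound $\|\grad^2 v\|_{L^2} \le c\|f\|_{L^2}$ holds, with $c$ depending only on the shape of the boundary $\partial B_1$. I would invoke the exterior Stokes theory from Galdi's book for this, being careful about the two-dimensional peculiarities (the failure of the decay $v\to 0$, the logarithmic issues for $\grad v$), and observe that $B_r$ is a dilate of the \emph{fixed} ball $B_1$, so the "shape" is literally unchanged by the rescaling and the geometric constants never move. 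A secondary, more routine point is matching the solution produced by this exterior theory with the one appearing in the lemma (and with $A_r$): one uses that $A_r u = \OP_r(-\nu\Delta u)$ together with the decomposition in Lemma \ref{lem:G1G2}, so that controlling $\|A_r u\|_{L^2}$ controls $\|f\|_{L^2}$ with $f = -\Delta u$ up to the gradient parts, which \eqref{eq:q2} and the bounds already established in Section \ref{sec:Pr} handle with $r$-independent constants.
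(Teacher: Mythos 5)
Your proposal is correct and follows essentially the same route as the paper: subtract the constant boundary value, rescale $\Omega_r$ to the fixed exterior domain $\R^2\setminus B_1$, apply Galdi's exterior Stokes estimate (the paper cites equation V.4.14 of Galdi 1994, with uniqueness of the homogeneous problem from Theorem V.3.5 via the $o(\log|y|)$ behaviour of $v\to-\dot h$, which matches your concern about the two-dimensional solution class), and scale back, where the powers of $r$ cancel exactly as you computed.
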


\begin{proof}
Let $y=x/r$ and $\nabla_y$ and $\Delta_y$ the gradient and Laplacian operators in $y$ coordinates. Then 
\begin{equation}\label{eq:exstokes2}
\left\{
\begin{array}{ll}
\Delta_y u+r\nabla_y p=r^2 f, &\mbox{ in }\R^2\setminus B_1\\
\nabla_y\cdot u=0,               &\mbox{ in }\R^2\setminus B_1\\
u=\dot h, &\mbox{ for }y\in\partial B_1
\end{array}
\right.
\end{equation}
Define $\Phi(x)=\dot h$ for any $x\in\R^2$ and
let $v=u-\Phi$. The vector $v$ satisfies
\begin{equation}\label{eq:exstokes2}
\left\{
\begin{array}{ll}
\Delta_y v+r\nabla_y p=r^2 f, &\mbox{ in }\R^2\setminus B_1\\
\nabla_y\cdot v=0,               &\mbox{ in }\R^2\setminus B_1\\
v=0, &\mbox{ for }y\in\partial B_1
\end{array}
\right.
\end{equation}
Now we use an estimate of Galdi (1994, equation V.4.14):
\begin{align}\label{e:D2bndy}
\|D_y^2 v\|\le cr^2\|f\|_{L^2}
\end{align}
with $c$ a constant independent of $r$. The above estimate holds if $S$, the set of solutions of (\ref{eq:exstokes2}) when $f=0$
contains only $v\equiv 0$. In our case $u\in L^2(\Omega)$ and 
$\Phi(y)\to \dot h$ as $y\to\infty$. 
Therefore $v\to -\dot h=o(\log|y|)$ and by Theorem V.3.5 of Galdi (1994)
$S=\{0\}$. Hence (\ref{e:D2bndy}) is valid. 
We now revert the coordinates to $x$ in (\ref{e:D2bndy}) and the result follows.
\qed
\end{proof}

\begin{remark}\label{r:omega}
It is for the result of the above lemma that we need to assume that
the rotation of the disc is zero. 
For a nonzero $\omega$ where $u=\dot h+x\,\omega/|x|$ on $\partial B_r$,
we have not been able to obtain a uniform bound on $\|\grad^2 u\|_{L^2(\Omega_r)}$.
\end{remark}

Now we can show the uniform estimates:

\begin{lemma}\label{lem:uniform}

Let (\ref{e:asp}) hold and consider $(u_r,h_r)$ to be a strong solution
of (\ref{fluid-newton})--(\ref{rigid-ic}) with $u_r(0)\in H^1(\Omega_r)$ and $\dot{h}_r(0)\in \R^2$. 
Then, there exist $T$ and $C$ depending on 
$\|u_r(0)\|$, $\|u_r(0)\|_{H^1}$ and $|\dot{h}_r(0)|$ but independent of $r$, such that $u_r$ is bounded by $C$ in
$$
L^2(0,T;H^2(\Omega_r))\cap L^\infty(0,T;H^1(\R^2))\cap H^1(0,T;L^2(\R^2)).
$$
\end{lemma}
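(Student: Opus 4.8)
The standard route is to test the functional equation \eqref{eq:functional} against $u_r$ and against $A_ru_r$, obtaining energy and enstrophy estimates, and then to show that the constants appearing can be made independent of $r$ by using Lemmas \ref{l:Galdi}, \ref{lem:trace} and \ref{lem:P-Pr}, together with the $r$-independent Sobolev embeddings of $\Omega_r$. The $H^1(0,T;L^2)$ bound then follows by reading $\ud u_r/\ud t = -A_ru_r - \OP_r\beta(u_r-\dot h_r,u_r)$ off the equation once the first two bounds are in place.

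\emph{Step 1 (energy estimate, $L^\infty(0,T;L^2)\cap L^2(0,T;V_r)$).} Take the inner product $(\cdot,\cdot)$ of \eqref{eq:functional} with $u_r$. The projection term $(\OP_r\beta(u_r-\dot h_r,u_r),u_r)$ is handled as in Takahashi \& Tucsnak: because $u_r$ is divergence-free and equals $\dot h_r$ on $\partial B_r$, the trilinear term $\int_{\Omega_r}((u_r-\dot h_r)\cdot\nabla)u_r\cdot u_r\,\ud y$ integrates to zero after symmetrising and using $\nabla\cdot(u_r-\dot h_r)=0$ together with $(u_r-\dot h_r)\cdot\bn=0$ on $\partial\Omega_r$. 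Using $(A_ru_r,u_r)=2\nu\|D(u_r)\|^2_{L^2(\Omega_r)}$ and Korn's inequality on the exterior domain (whose constant does not depend on $r$ for this geometry), one gets
\begin{equation*}
\frac{\ud}{\ud t}\|u_r\|^2 + c\nu\|\nabla u_r\|^2_{L^2(\Omega_r)}\le 0,
\end{equation*}
so $\|u_r(t)\|\le\|u_r(0)\|$ and $u_r\in L^2(0,T;V_r)$ with an $r$-uniform bound. In particular $M_r|\dot h_r(t)|^2\le\|u_r(0)\|^2$, so $|\dot h_r|$ grows like $r^{-1}$ and cannot be controlled this way — which is exactly why the finer estimate below is needed and why one only gets a short time interval.

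\emph{Step 2 ($H^1$ estimate, the main obstacle).} Take the inner product of \eqref{eq:functional} with $A_ru_r$. The self-adjointness gives $\tfrac12\frac{\ud}{\ud t}(2\nu\|D(u_r)\|^2_{L^2(\Omega_r)}) + \|A_ru_r\|^2 = -(\OP_r\beta(u_r-\dot h_r,u_r),A_ru_r)$. For the nonlinear term one writes
\begin{equation*}
|(\OP_r\beta(u_r-\dot h_r,u_r),A_ru_r)|\le \|(u_r-\dot h_r)\cdot\nabla u_r\|_{L^2(\Omega_r)}\,\|A_ru_r\|,
\end{equation*}
and estimates $\|(u_r-\dot h_r)\cdot\nabla u_r\|_{L^2(\Omega_r)}$ by splitting it as $\|u_r\cdot\nabla u_r\|_{L^2(\Omega_r)} + |\dot h_r|\,\|\nabla u_r\|_{L^2(\Omega_r)}$. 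The first piece is controlled via the Ladyzhenskaya-type inequality $\|u_r\|_{L^4(\Omega_r)}\le c\|u_r\|^{1/2}_{L^2(\Omega_r)}\|u_r\|^{1/2}_{H^1(\Omega_r)}$, then Gagliardo–Nirenberg for $\|\nabla u_r\|_{L^4}$ in terms of $\|\nabla u_r\|_{L^2}$ and $\|\nabla^2 u_r\|_{L^2}$, with all embedding constants $r$-independent because the cone of $\Omega_r$ is $r$-independent; then Lemma \ref{l:Galdi} converts $\|\nabla^2 u_r\|_{L^2(\Omega_r)}$ into $\|A_ru_r\|$ with an $r$-uniform constant (here one applies the lemma with $f = -A_ru_r - (u_r\cdot\nabla)u_r + (\dot h_r\cdot\nabla)u_r$, or more directly uses that $u_r(t)\in D(A_r)$ solves an exterior Stokes problem with datum $\dot h_r$ on $\partial B_r$). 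The genuinely dangerous piece is $|\dot h_r|\,\|\nabla u_r\|_{L^2(\Omega_r)}\,\|A_ru_r\|$, since $|\dot h_r|$ is only $O(r^{-1})$. This is absorbed by replacing $\dot h_r$ in the convective term by $\dot h_r$ acting on the \emph{full} $H_r$-extension of $u_r$: on $B_r$ one has $u_r\equiv\dot h_r$ so $(u_r-\dot h_r)\equiv 0$ there, and moreover $\dot h_r = \tfrac{1}{\pi r^2}\int_{B_r}u_r\,\ud y$ so that $|\dot h_r|\le c r^{-1}\|u_r\|_{L^2(B_r)}$; combining this with an $r$-uniform trace/interpolation estimate for $\|\nabla u_r\|$ near $\partial B_r$ of the type proved in Lemma \ref{lem:trace} (which supplies the compensating power of $r$) keeps the term $r$-uniformly bounded by $\varepsilon\|A_ru_r\|^2 + C(\|u_r\|_{H^1})$. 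After Young's inequality to absorb all $\|A_ru_r\|$ factors into the left-hand side, one is left with a differential inequality of Riccati type,
\begin{equation*}
\frac{\ud}{\ud t}\,y(t) + \tfrac12\|A_ru_r\|^2 \le C\,y(t)^3 + C,\qquad y(t):=\nu\|D(u_r(t))\|^2_{L^2(\Omega_r)},
\end{equation*}
whose constant $C$ is independent of $r$. Such an inequality guarantees a bound on $y$ on a time interval $[0,T]$ whose length depends only on $y(0)\le c\|u_r(0)\|^2_{H^1}$ — hence $T$ and $C$ depend only on $\|u_r(0)\|$, $\|u_r(0)\|_{H^1}$, $|\dot h_r(0)|$ and are $r$-independent. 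Integrating also yields the $L^2(0,T;H^2(\Omega_r))$ bound after one more application of Lemma \ref{l:Galdi}, and extending $u_r$ by its rigid value on $B_r$ turns the $H^1(\Omega_r)$ bound into an $H^1(\R^2)$ bound (the $B_r$ contribution is $O(1)$ since $M_r|\dot h_r|^2$ is bounded).

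\emph{Step 3 ($H^1(0,T;L^2)$).} From the equation, $\|\ud u_r/\ud t\|\le\|A_ru_r\| + \|\beta(u_r-\dot h_r,u_r)\|$, and both terms on the right are now in $L^2(0,T)$ uniformly in $r$ by Steps 1–2 (the nonlinear term was already estimated above). This gives the $H^1(0,T;L^2(\R^2))$ bound and completes the proof. The only subtle point throughout is the uniformity of the various functional inequalities (Korn, Ladyzhenskaya, Sobolev, trace) on the $r$-dependent domain $\Omega_r$; this is precisely what is supplied by Lemma \ref{l:Galdi}, Lemma \ref{lem:trace} and the remark on the $r$-independent cone property, so I would cite those at each step rather than re-derive the geometry.
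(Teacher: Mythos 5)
Your overall scheme---the energy estimate, testing with $A_ru_r$, converting $\|A_ru_r\|_{L^2}$ into $\|\nabla^2u_r\|_{L^2(\Omega_r)}$ via Lemma~\ref{l:Galdi}, and reading the $H^1(0,T;L^2)$ bound off the equation---is the same as the paper's. But there is a genuine gap at precisely the point you yourself flag as ``the genuinely dangerous piece'', namely the term $|\dot h_r|\,\|\nabla u_r\|_{L^2(\Omega_r)}\,\|A_ru_r\|_{L^2}$. The fix you propose does not work as stated. The identity $\dot h_r=\frac{1}{\pi r^2}\int_{B_r}u_r\,\ud y$ is circular: since $u_r\equiv\dot h_r$ on $B_r$ one has $\|u_r\|_{L^2(B_r)}=\sqrt{\pi}\,r\,|\dot h_r|$ exactly, so the bound $|\dot h_r|\le c\,r^{-1}\|u_r\|_{L^2(B_r)}$ just returns $|\dot h_r|\le c\,|\dot h_r|$. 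And Lemma~\ref{lem:trace} does not supply a compensating power of $r$ here: applied to $u_r$ on $\paro_r$ it gives $(2\pi r)^{1/2}|\dot h_r|=\|u_r\|_{L^2(\paro_r)}\le c\,r^{\alpha/2}\|u_r\|_{H^1(\Omega_r)}$, i.e.\ $|\dot h_r|\le c\,r^{(\alpha-1)/2}\|u_r\|_{H^1(\Omega_r)}$, which blows up as $r\to0$ since $\alpha<1$. Consequently the constant in your Riccati-type inequality is not shown to be independent of $r$, and $r$-uniformity is the entire content of the lemma.

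What the paper does at this point is prove the $r$-uniform bound (\ref{eq:h'bnd}): $|\dot h_r|\le c\,\|u_r\|_{W^{1,q}(\Omega_r)}$ with $q>2$ and $c$ independent of $r$. The mechanism is a pointwise trace argument on an $r$-independent domain: for $x\in\paro_r$ one has $\dot h_r\cdot\bn=u_r(x)\cdot\bn$; taking the line $L_x$ tangent to $\paro_r$ at $x$ and using the trace/embedding theorem on the half-plane bounded by $L_x$ that does not contain $B_r$ (whose geometry does not depend on $r$) gives $|u_r(x)|\le c\,\|u_r\|_{L^\infty(L_x)}\le c\,\|u_r\|_{W^{1,q}(\Omega_r)}$. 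With this, the dangerous term is estimated by $c\,(\|Du_r\|_{L^3(\Omega_r)}+\|u_r\|_{L^3(\Omega_r)})\,\|Du_r\|_{L^2(\Omega_r)}\,\|A_ru_r\|_{L^2}$, and interpolation of the $L^3$ norms between $L^2$ and $L^6\hookrightarrow H^1$, together with $\|Du_r\|_{L^6}\le c\,\|Du_r\|_{H^1}\le c\,\|A_ru_r\|_{L^2}$ (Lemma~\ref{l:Galdi} again) and Young's inequality, yields exactly the $r$-uniform differential inequality you wrote down, hence the $r$-independent $T$ of (\ref{eq:T1}). You need this estimate (or some equivalent $r$-uniform control of $|\dot h_r|$ by the solution) to close your Step~2; the remaining ingredients of your plan---Step~1, the use of Lemma~\ref{l:Galdi} for the $L^2(0,T;H^2)$ bound, the extension to $H^1(\R^2)$ using the boundedness of $M|\dot h_r|^2$, and Step~3---do match the paper's proof.
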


\begin{proof} 
Taking the inner product of 
$$
\frac{\ud u_r}{\ud t}+A_r u_r+\OP_r\,\beta(u_r-\dot{h}_r,u_r)=0
$$
with $u_r\in H_r$, we obtain
$$
\frac{1}{2}\frac{\ud}{\ud t}\|u_r\|_{H_r}^2+\nu\|\nabla u_r\|_{L^2(\Omega_r)}^2
=-\int_{\Omega_r}((u_r-\dot{h}_r,u_r)\cdot\nabla)u_r\cdot u_r\,\ud y.
$$
Since 
$$
((u_r-\dot{h}_r)\cdot\nabla)u\cdot u=\frac{1}{2}((u_r-\dot{h}_r)\cdot\nabla)|u|^2=\frac{1}{2}\nabla\cdot(|u|^2 (u_r-\dot{h}_r)),
$$
we have
$$
\int_{\Omega_r}((u_r-\dot{h},u_r)\cdot\nabla)u_r\cdot u_r\,\ud y
=\frac{1}{2}\int_{\partial\Omega_r}|u_r|^2(u_r\cdot\bn-\dot{h}_r\cdot\bn)\,\ud y=0.
$$
Therefore 
$$
\frac{1}{2}\frac{\ud}{\ud t}\|u_r\|_{H_r}^2+\nu\|\nabla u_r\|_{L^2(\Omega_r)}^2=0
$$
which after intrgration gives the following estimates
\begin{align}\label{eq:firstbnd}
\|u_r(t)\|_{L^2}\le \|u_r(0)\|_{L^2}
\quad\mbox{and}\quad
\int_0^T \|u_r\|_{H^1(\Omega_r)}^2\,\ud t\le c\,\|u_r(0)\|_{L^2}^2
\end{align}
for almost every $t\in [0,T]$. We also note that since
$$
\dot{h}_r\cdot\bn=u_r(x)\cdot\bn\quad\mbox{for almost every }x\in\partial\Omega_r
$$
we have 
\begin{align}\label{eq:h'bnd}
|\dot{h}_r|\,\le\,c\,\|u_r\|_{W^{1,q}(\Omega_r)}.
\end{align}
with $c$ independent of $r$. To show this, for any $x\in\paro_r$, we consider the line $L_x$ tangent to $\paro_r$ 
at $x$ and dividing $\R^2$ to two parts. Using the trace theorem for $L_x$ as 
the boundary of the part that does not contain $B_r$, we can write
$$|u_r(x)|\le c\,\|u_r\|_{L^\infty(L_x)} \le c\|u_r\|_{W^{1,q}(\Omega_r)}$$
which implies the previous inequality.

Now we take the inner product of (\ref{eq:functional}) with $A_r u_r\in [L^2(\R^2)]^2$ and obtain
\begin{align*}
\frac{1}{2}\frac{\ud}{\ud t}
&\|\nabla u_r\|^2+\|A_r u_r\|^2\\
&\le \|\,|u_r|\,|\nabla u_r|\, |A_r u_r|\,\|_{L^1(\Omega_r)}
    +|\dot{h}_r|\,\|\,|\nabla u_r|\,|A_r u_r|\,\|_{L^1(\Omega_r)}\\
&\le \|u_r\|_{L^6(\Omega_r)}\,\|Du_r\|_{L^3(\Omega_r)}\,\|A_r u_r\|_{L^2(\Omega_r)}\\
&\qquad\qquad    +c\,(\|Du_r\|_{L^3(\Omega_r)}+\|u_r\|_{L^3(\Omega_r)})\,
                                 \|Du_r\|_{L^2(\Omega_r)}\,\|A_ru_r\|_{L^2(\Omega_r)}\\
&\le c\|u_r\|_{L^6(\Omega_r)}\,
         \|Du_r\|_{L^2(\Omega_r)}^{1/2}\,\|Du_r\|_{L^6(\Omega_r)}^{1/2}\,
                \|A_r u_r\|_{L^2(\Omega_r)}\\
&\qquad\qquad +c\|Du_r\|_{L^2(\Omega_r)}^{3/2}\,\|Du_r\|_{L^6(\Omega_r)}^{1/2}\,
                \|A_ru_r\|_{L^2(\Omega_r)}\\
&\qquad\qquad +c\,\|u_r\|_{L^2(\Omega_r)}^{1/2}\,\|u_r\|_{L^6(\Omega_r)}^{1/2}
                                 \|Du_r\|_{L^2(\Omega_r)}\,\|A_ru_r\|_{L^2(\Omega_r)}\\
&\le c\,\|u_r\|_{L^2(\Omega_r)}^{1/2}\,\|u_r\|_{H^1(\Omega_r)}^{1/2}\,
           \|Du_r\|_{H^1(\Omega_r)}^{1/2}\,
            \|Du_r\|_{L^2(\Omega_r)}^{1/2}\,\|A_ru_r\|_{L^2(\Omega_r)}\\
&\qquad\qquad +c\,(\|Du_r\|_{L^2(\Omega_r)}^{1/2}+\|A_ru_r\|_{L^2(\Omega_r)}^{1/2})\,
            \|Du_r\|_{L^2(\Omega_r)}^{3/2}\,\|A_ru_r\|_{L^2(\Omega_r)}
\end{align*}
for almost every $t\in [0,T]$. We therefore conclude that
\begin{align}
\frac{\ud}{\ud t}
\|\nabla u_r\|^2+\|A_r u_r\|^2
&\le c(1+\|u_r(0)\|_{L^2(\R^2)}^4)(1+\|\grad u_r\|_{L^2(\Omega_r)}^2)^3\label{eq:Du}.
\end{align}
Dropping $\|A_r u_r\|^2_{L^2(\Omega_r)}$ and integrating, we obtain
$$
1+\|\grad u_r(t)\|^2_{L^2(\Omega_r)}
\le \frac{1+\|u_r(0)\|_{H^1}^2}
{\sqrt{1-ct(1+\|u_r(0)\|_{L^2(\R^2)}^4)\,(1+\|u_r(0)\|^2_{H^1})^2}}.
$$
Therefore for almost every $t\le T$ with
\begin{align}\label{eq:T1}
T=\frac{1}{2c(1+\|u_r(0)\|_{L^2(\R^2)}^4)\,(1+\|u_r(0)\|^2_{H^1})^2},
\end{align}
we have 
$$
\|u_r(t)\|_{H^1(\Omega)}\le C(\|u_r(0)\|,\|u_r(0)\|_{H^1},|h'_r(0)|,|\omega_0|).
$$
By Theorem \ref{thm:taktuc}, and also since by (\ref{eq:firstbnd}), $M|\dot{h}|^2$ and $J|\omega|^2$ 
are \ uniformly bounded by $\|u_r(0)\|_{H_r}$, we conclude that $u_r$ is$\ $  uniformly bounded in $L^\infty (0,T;H^1(\R^2))$.
With the above bound, we integrate (\ref{eq:Du}) and obtain a uniform bound 
on $\|A_r u_r\|_{L^2(0,T;L^2(\Omega_r))}$. 
This, by Lemma \ref{l:Galdi} and the uniform bound on $\|u_r(t)\|_{H^1(\Omega)}$, implies that 
$$
\|u_r(t)\|_{H^2(\Omega)}\le C(\|u_r(0)\|,\|u_r(0)\|_{H^1},|h'_r(0)|,|\omega_0|).
$$
Using (\ref{eq:h'bnd}), we have
\begin{align}\label{eq:h'bndi}
\int_0^{T}|\dot{h}|^4\,\ud t
\,\le\, c\,\sup_{t\in [0,T]}\|u_r(t)\|_{H^1(\Omega_r)}^2 \int_0^T \|u_r\|_{H^2(\Omega_r)}^2\,\le\,C.
\end{align}
Finally since $\dot{u}_r=-A_ru_r-\OP_r\beta(u_r-\dot{h}_r,u_r)$, we have
\begin{align*}
\|\dot{u}\|_{L^2(\R^2)}
&\le c\,\|A_ru_r\|_{L^2(\R^2)}+c\,\|((u_r-\dot{h}_r)\cdot\nabla)u_r\|_{L^2(\Omega_r)}\\
&\le c\,\|u_r\|_{H^2(\Omega_r)}
    +c\,\|u_r(t)\|_{L^2(\Omega_r)}^{1/2}\,\|u_r(t)\|_{H^1(\Omega_r)}\,\|u_r(t)\|_{H^2(\Omega_r)}^{1/2}\\
&\qquad\quad +c\,|\dot{h}_r|\,\|u_r(t)\|_{H^1(\Omega_r)}
\end{align*}
which gives a uniform bound on $\|\dot{u}_r\|_{L^2(0,T;L^2(\R^2))}$. \qed
\end{proof}
%


\begin{lemma}\label{lem:conv}
Let (\ref{e:asp}) hold and consider $(u_r,h_r)$ to be a strong solution of (\ref{fluid-newton})--(\ref{rigid-ic}) 
with $u_r$ and $\dot{h}_r$ uniformly bounded in 
$$
L^2(0,T;H^2(\Omega_r))\cap L^\infty(0,T;H^1(\R^2))\cap H^1(0,T;L^2(\R^2))
\,\mbox{ and }\, L^4(0,T;\R^2)
$$ 
respectively. 
Then 
there exists 
$$
u\in L^2(0,T;H^2(\R^2)\cap L^\infty(0,T;H^1(\R^2) \cap H^1(0,T;L^2(\R^2))
$$ 
and $\dot{h}\in L^4(0,T;\R^2)$ 
and a real positive decreasing sequence $\{r_n\}_{n\in\N}$ converging to zero, 
such that, 
as $n\to \infty$, $u_{r_n}$ converges strongly 
to $u$ in $L^2(0,T;H^1(\Omega_s))$ for any $s>0$,$\ $ and 
$\;\dot{h}_{r_n}$ converges strongly to $\dot{h}$ in $L^2(0,T;\R^2)$.
\end{lemma}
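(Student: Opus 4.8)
The plan is a standard compactness argument based on Aubin--Lions--Simon, carried out on an exhaustion of $\R^2$ by bounded subdomains and then combined with a diagonal extraction. Fix $s>0$; for every $r<s$ one has $\Omega_s\subset\Omega_r$, so the hypotheses furnish bounds on $u_r$, uniform in such $r$, in $L^2(0,T;H^2(\Omega_s))\cap L^\infty(0,T;H^1(\R^2))\cap H^1(0,T;L^2(\R^2))$. For a fixed $R>0$ the set $\Omega_s\cap B_R$ is a bounded Lipschitz domain on which $H^1\hookrightarrow\hookrightarrow L^2$ compactly, while $\partial_t u_r$ is bounded in $L^2(0,T;L^2(\Omega_s\cap B_R))$; hence $\{u_r\}_{r<s}$ is relatively compact in $C([0,T];L^2(\Omega_s\cap B_R))$, in particular in $L^2(0,T;L^2(\Omega_s\cap B_R))$. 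Choosing sequences $s_k\downarrow 0$ and $R_k\uparrow\infty$ and diagonalising, I obtain a single decreasing sequence $r_n\to 0$ and a limit $u$ with $u_{r_n}\to u$ strongly in $L^2(0,T;L^2(\Omega_{s_k}\cap B_{R_k}))$ for every $k$, hence in $L^2_{\mathrm{loc}}$; the stated regularity of $u$ then follows from weak-$\ast$ lower semicontinuity of the uniform bounds, and $\dot h$ arises as a weak limit in $L^4(0,T;\R^2)$ of the uniformly bounded $\dot h_{r_n}$.

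To pass from $L^2$ to $H^1$ in space on bounded pieces I would use the interpolation inequality $\|v\|_{H^1(\Omega_s\cap B_R)}\le c\,\|v\|_{L^2(\Omega_s\cap B_R)}^{1/2}\|v\|_{H^2(\Omega_s\cap B_R)}^{1/2}$, which gives $\int_0^T\|u_{r_n}-u\|_{H^1(\Omega_s\cap B_R)}^2\,\ud t\le c\,\|u_{r_n}-u\|_{L^2(0,T;L^2(\Omega_s\cap B_R))}\,\|u_{r_n}-u\|_{L^2(0,T;H^2(\Omega_s\cap B_R))}\to 0$, the second factor staying bounded by the uniform $H^2$ estimate. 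To reach the full exterior domain $\Omega_s$ I would add a uniform spatial tail estimate: testing the equation in the region $\{|x|>R\}$, where $B_r$ sits well inside $B_R$ and no rigid-body terms are present, against $\chi_R u_r$ for a cutoff $\chi_R$ vanishing on $B_R$, equal to $1$ outside $B_{2R}$ with $|\nabla\chi_R|\le c/R$, and estimating the pressure through the exterior-Stokes bound, yields $\int_0^T\int_{|x|>2R}(|u_{r_n}|^2+|\nabla u_{r_n}|^2)\,\ud x\,\ud t\le c\int_{|x|>R}|u_{r_n}(0)|^2\,\ud x+c/R$, which tends to $0$ as $R\to\infty$ uniformly in $n$, provided the initial data are suitably tight at infinity (as is the case for the solutions obtained in Section~\ref{sec:uniform}). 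Combined with the previous step, this gives $u_{r_n}\to u$ strongly in $L^2(0,T;H^1(\Omega_s))$ for every $s>0$.

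For the strong convergence of $\dot h_r$ beyond the weak limit already extracted, I would use that $\omega=0$ forces $u_r$ to equal the constant $\dot h_r$ on $B_r$, so $\dot h_r(t)=\frac{1}{\pi r^2}\int_{B_r}u_r(y,t)\,\ud y$; combining the strong local convergence $u_{r_n}\to u$, the uniform local $H^2\hookrightarrow C^0$ bound on $u_{r_n}$, and the continuity of $u(\cdot,t)$, one identifies $\dot h(t)=u(0,t)$ and upgrades to strong convergence in $L^2(0,T;\R^2)$. The main obstacle is precisely the passage to the unbounded domain: the compact embeddings that drive Aubin--Lions are unavailable on $\Omega_s$ itself, so the argument genuinely needs the uniform tail estimate (equivalently, a tightness-at-infinity property of $\{u_r\}$), and carrying it out cleanly — in particular handling the pressure and the transport term $-(\dot h_r\cdot\nabla)u_r$ in the weighted energy identity — is the delicate part of the proof.
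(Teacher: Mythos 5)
Your overall strategy (compactness plus interpolation between $L^2$ and the uniform $H^2$ bound) is in the same spirit as the paper's, but as written it has a genuine gap at its central step: the passage from convergence on bounded pieces $\Omega_s\cap B_R$ to convergence on all of the unbounded domain $\Omega_s$. You make this rest on a uniform tail estimate which you do not prove (the cut-off energy identity is only sketched; the pressure bound on an annulus uniformly in $r$, the transport term $-(\dot h_r\cdot\nabla)u_r$, and the origin of the $c/R$ term are all left open), and, more importantly, you derive it from "tightness at infinity of the initial data", a hypothesis that appears neither in the statement of the lemma (which assumes only the uniform norm bounds) nor in the setting of Theorem \ref{thm:limeqn}, where $u_r(0)\rightharpoonup u_0$ weakly in $L^2$ with a uniform $H^1$ bound — weak convergence gives no tightness. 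So your argument proves strong convergence only in $L^2(0,T;H^1_{loc})$ unless an extra assumption is imported. The paper proceeds differently: from the uniform bounds in $L^2(0,T;V_s)\cap L^\infty(0,T;H_s)\cap H^1(0,T;V_s^*)$ it extracts a subsequence converging strongly in $L^2(0,T;L^2(\R^2))$ by the standard Aubin--Lions-type theorem (citing Temam 1977 / Robinson 2001), and then upgrades to $H^1(\Omega_s)$ using an extension operator $E_s$ and Ehrling's lemma, which is essentially your interpolation inequality; no tail estimate appears.

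Two further steps are asserted rather than proved in your proposal. First, $u\in L^2(0,T;H^2(\R^2))$ does not follow from "weak-$*$ lower semicontinuity of the uniform bounds" alone: those bounds give $u\in L^2(0,T;H^2(\Omega_s))$ for every $s>0$ with a uniform constant, and one must still remove the possible singularity of $D^2u$ at the origin; the paper does this via the ACL characterization of Sobolev functions (Ziemer, Theorem 2.1.4) together with the fact that $\cap_n(\R^2\setminus\Omega_{s_n})$ is a single point. Second, your treatment of $\dot h_r$ via $\dot h_r(t)=\frac{1}{\pi r^2}\int_{B_r}u_r\,\ud y$ leaves precisely the quantitative step unproved: since $u_r$ equals $\dot h_r$ only on the shrinking disc, comparing it with $u(0,t)$ requires either convergence of $u_r-u$ at rate $o(r)$ in $L^2(B_r)$ or a uniform modulus of continuity up to the moving boundary, and "upgrades to strong convergence in $L^2(0,T;\R^2)$" hides exactly this. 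The paper instead shows $\{\dot h_{r_n}\}$ is Cauchy in $L^2(0,T;\R^2)$ by comparing boundary traces on $\partial\Omega_{r_m}$, using $\|w\|_{L^\infty}\le c\|w\|_{H^1}^{1/2}\|w\|_{H^2}^{1/2}$ together with a modulus-of-continuity term $c\,r_m^{\alpha/2}\|u_{r_n}\|_{H^{3/2}}$; note also that identifying $\dot h(t)$ with $u(0,t)$ is not needed for this lemma — it is part (ii) of Theorem \ref{thm:limeqn}.
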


\begin{proof}
By Lemma \ref{lem:uniform}, $u\in L^\infty(0,T;H^1(\R^2) \cap H^1(0,T;L^2(\R^2))$. 
To show that $u\in L^2(0,T;H^2(\R^2))$, we note that
since $u_r$ is uniformly bounded 
in $L^2 (0,T;H^2(\Omega_{r}))$, for any $s>r$ we have
\begin{align}\label{eq:pre1}
\|u_{r}\|_{ L^2 (0,T;H^2(\Omega_{s}))}\le C
\end{align}
with $C$ independent of $r$ and $s$. Therefore for any $s>0$, there exists a sequence $\{u_{r_n}\}_{n\in\N}$
which converges weakly to $u\in L^2(0,T;H^2(\Omega_s))$ as $n\to\infty$. Hence, by uniqueness of weak limits
$u\in L^2(0,T;H^2(\Omega_s))$ for any $s>0$. 
Now we note that by Theorem 2.1.4 of Ziemer (1989)
a function $w\in L^2(\Omega)$ with $\Omega\subset \R^2$, is in $H^1(\Omega)$ if and only if $w$ is absolutely continuous on almost all lines in $\Omega$ parallel to the coordinate axes, and its partial derivatives belong to $L^2(\Omega)$. Consider a real positive sequence $\{s_n\}_{n\in\N}$ converging to zero as $n\to\infty$. 
Since $u\in L^2(0,T;H^2(\Omega_{s_n}))$ for any $n$, $Du$ is absolutely continuous on all lines parallel to coordinates axes in $\Omega_{s_n}$ and the partial derivatives of all elements of $Du$ are uniformly (in $n$) 
bounded in $L^2(\Omega_{s_n})$. 
Then, since 
$\cap_{n=1}^{\infty} \R^2\setminus\Omega_{s_n}=\{(0,0)\}$, 
and noting that the countable intersection of sets of full measure in $[0,T]$, 
has full measure in $[0,T]$, it follows that 
$Du\in L^2(0,T;H^1(\R^2))$. This then, since $u\in L^2(0,T;H^1(\R^2))$, 
implies that $u\in L^2(0,T;H^2(\R^2))$. 

We now show the strong convergence in $L^2(0,T;H^1(\Omega_s))$ for any $s>0$. Since $u_r$ is 
uniformly bounded in $L^2(0,T;V_r)\cap L^\infty(0,T;H_r)\cap H^1(0,T;V_r^*)$, it is uniformly bounded
in $L^2(0,T;V_s)\cap L^\infty(0,T;H_s)\cap H^1(0,T;V_s^*)$ for any fixed $s>r$ and therefore 
there exists a subsequence $\{u_{r_n}\}_{n\in\N}$ 
(with $r_n<r_m$ if $n>m$) which converges strongly in $L^2(0,T;H_s)$ as $n\to \infty$ 
(see Robinson 2001 or Temam 1977 for example). 
Therefore for any $\epsilon>0$ there exists some $N$ such that for $n>N$
\begin{align}\label{eq:pre2}
\|u-u_{r_n}\|_{L^2(0,T;L^2({\R^2}))}\le \epsilon.
\end{align}
For any fixed $s>0$, let $E_s[u-u_{r_n}]$ be an extension of $u-u_{r_n}$ from $\Omega_{s}$ to $\R^2$, such that for almost every $t\in[0,T]$
\begin{align}\label{eq:E}
E_s[u-u_{r_n}]=u-u_{r_n}\quad\mbox{almost everywhere in }\Omega_{s},
\end{align}
and
\begin{align}\label{eq:E-bnd}
\|E_s[u-u_{r_n}]\|_{H^k(\R^2)}\le c_k\|u-u_{r_n}\|_{H^k(\Omega_s)}.
\end{align}
with $k\ge 0$ and $c_k=c_k(s)$ (one can show that $c_0$ is independent of $s$
but for our purpose here it makes no difference). 
By Ehrling's lemma for any $\eta>0$ there exists a $c_\eta>0$ such that 
\begin{align*}
&\|E_s[u-u_{r_n}]\|_{L^2(0,T;H^1(\R^2))}\\
&\qquad\quad\le \eta \|E_s[u-u_{r_n}]\|_{L^2(0,T;H^2(\R^2))}
    + c_\eta \|E_s[u-u_{r_n}]\|_{L^2(0,T;L^2(\R^2))}.
\end{align*}
Let $\epsilon>0$ and take $\eta=\epsilon/c_2(s)$. For any $n\ge m$, we can write
\begin{align*}
&\|u-u_{r_n}\|_{L^2(0,T;H^1(\Omega_{s}))}
\le\|E_s[u-u_{r_n}]\|_{L^2(0,T;H^1(\R^2))}\\
&\qquad\le \eta \|E_s[u-u_{r_n}]\|_{L^2(0,T;H^2(\R^2))}
    + c_\eta \|E_s[u-u_{r_n}]\|_{L^2(0,T;L^2(\R^2))}\\ 
&\qquad\le \epsilon \|u-u_{r_n}\|_{L^2(0,T;H^2(\Omega_{s}))}
    + c_0\,c_\eta \|u-u_{r_n}\|_{L^2(0,T;L^2(\Omega_{s}))}.
\end{align*}
By ({\ref{eq:pre2}}) for $n$ big enough we have
$$
\|u-u_{r_n}\|_{L^2(0,T;L^2(\Omega_s))}\le \frac{\epsilon}{c_0\,c_\eta}.
$$
Also by (\ref{eq:pre1})
$$
\|u-u_{r_n}\|_{L^2(0,T;H^2(\Omega_s))}\le C
$$
with $C$ independent of $n$ and $s$. Hence, for any $\epsilon>0$ there exists some $N_1$ depending on $\epsilon$ and $s$ such that 
$$
\|u-u_{r_n}\|_{L^2(0,T;H^1(\Omega_{s}))}<C\epsilon
$$
for $n>N_1$. Therefore for a fixed $s$, 
$\|u-u_{r_n}\|_{L^2(0,T;H^1(\Omega_{s}))}\to 0$ as $n\to\infty$.

It remains to show that $\dot{h}_r\to \dot{h}$ in $L^2(0,T;\R^2)$. For any $n>m$ we have
\begin{align*}
|(\dot{h}_{r_m}&-\dot{h}_{r_n})\cdot \bn|\\
&=\lt|\big(u_{r_m}(r_m\bn)-u_{r_n}(r_n\bn)\big)\cdot\bn\rt|\\
&=\lt|\big(u_{r_m}(r_m\bn)-u_{r_n}(r_m\bn)\big)\cdot\bn+\big( u_{r_n}(r_m\bn)-u_{r_n}(r_n\bn)\big)\cdot\bn\rt|\\
&\le \sup_{x\in\partial\Omega_{r_m}}|u_{r_m}(x)-u_{r_n}(x)|+c\,\|u_{r_n}\|_{H^{3/2}(\Omega_{r_n})}\,{r_m}^{\alpha/2}\\
&\le c\,\|u_{r_m}-u_{r_n}\|_{H^1(\Omega_{r_m})}^{1/2}\,\|u_{r_m}-u_{r_n}\|_{H^2(\Omega_{r_m})}^{1/2}\\
&\qquad   +c\,\|u_{r_n}\|_{H^1(\Omega_{r_n})}^{1/2}\,\|u_{r_n}\|_{H^2(\Omega_{r_n})}^{1/2}\,{r_m}^{\alpha/2}
\end{align*}
with $\alpha<1$. Since the above inequality is true for any $\bn$, we conclude that for $n>m>N$,
$$
\int_0^T |\dot{h}_{r_m}-\dot{h}_{r_n}|^2\,\ud t\to 0 \quad\mbox{as}\quad N \to \infty
$$
and obtain the required strong convergence. \qed
\end{proof}

The above lemma shows the strong convergence of subsequences of $u_r$ and $h_r$. 
However, after showing that the limit $u$ satisfies the Navier-Stokes equations in the 
whole plane and therefore is unique, one can show the above strong convergence result for any $r$ rather 
than only a subsequence $\{r_n\}$: we argue along the lines of the proof of Lemma 3.1 of
Robinson (2004). We suppose that for some $s>0$, $u_r$ does not converge to $u$ (strongly) in 
$L^2(0,T;H^1(\Omega_s))$. This means that there exists a subsequence $\{u_{r_m}\}$ and a $\delta>0$
such that
$$
\|u_{r_m}-u\|_{L^2(0,T;H^1(\Omega_s))}>\delta.
$$
But $u_{r_m}$, by Lemma \ref{lem:uniform} is uniformly bounded in 
$$
L^2(0,T;H^2(\Omega_r))\cap L^\infty(0,T;H^1(\R^2))\cap H^1(0,T;L^2(\R^2))
$$
and therefore by Lemma \ref{lem:conv}, has a subsequence that converges to $u$
strongly in $L^2(0,T;H^1(\Omega_s))$, contradicting the above inequality.
One can argue similarly and conclude that $\dot h_r\to \dot{h}$ in $L^2(0,T;\R^2)$. 
Therefore, from now on, we consider 
$u_r$ (rather than $u_{r_n}$) converging strongly to $u$ in $L^2(0,T;H^1(\Omega_s))$
for any $s>0$,
and $\dot{h}_r$ (rather$\,$ than $\,\dot{h}_{r_n}$) converging strongly to $\dot{h}$ in $L^2(0,T;\R^2)$.


\begin{corollary}
Let $u_r$, $h_r$, $u$ and $h$ be as in Lemma \ref{lem:conv}. Then
$$
\|u-u_r\|_{L^2(0,T;H^1(\Omega_r))}\to 0,\quad\mbox{as }r\to 0.
$$
\end{corollary}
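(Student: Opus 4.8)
The plan is to split the exterior domain into a fixed exterior domain and a thin shrinking annulus, and treat the two pieces separately. Fix a small parameter $s>0$ and, for $r<s$, write $A_{r,s}=\{x\in\R^2:r<|x|\le s\}$, so that $\Omega_r=\Omega_s\cup A_{r,s}$ is a disjoint decomposition and, since integration is additive over it,
\begin{equation*}
\|u-u_r\|_{L^2(0,T;H^1(\Omega_r))}^2=\|u-u_r\|_{L^2(0,T;H^1(\Omega_s))}^2+\|u-u_r\|_{L^2(0,T;H^1(A_{r,s}))}^2.
\end{equation*}
On $\Omega_s$ I would invoke the strong convergence already established in Lemma~\ref{lem:conv} (upgraded from subsequences to the full family $r\to 0$ by the argument following its proof): for each fixed $s>0$ one has $\|u-u_r\|_{L^2(0,T;H^1(\Omega_s))}\to 0$ as $r\to 0$.

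For the annulus term I would estimate $\|u-u_r\|_{L^2(0,T;H^1(A_{r,s}))}\le\|u\|_{L^2(0,T;H^1(A_{r,s}))}+\|u_r\|_{L^2(0,T;H^1(A_{r,s}))}$ and show both pieces are small, uniformly in $r$, once $s$ is small. Since $u\in L^2(0,T;H^1(\R^2))$ and $|A_{r,s}|\le\pi s^2$, the first piece tends to $0$ as $s\to 0$ by absolute continuity of the integral (dominated convergence with dominating function $\|u(t)\|_{H^1(\R^2)}^2\in L^1(0,T)$), uniformly in $r$. For the second, I would use the uniform bounds of Lemma~\ref{lem:uniform} ($u_r$ bounded in $L^\infty(0,T;H^1(\R^2))\cap L^2(0,T;H^2(\Omega_r))$, with bounds independent of $r$) together with the $r$-independent Sobolev embeddings $H^1(\R^2)\hookrightarrow L^4$ and $H^1(\Omega_r)\hookrightarrow L^4(\Omega_r)$ recorded before Lemma~\ref{lem:P-Pr} (the cone parameters of $\Omega_r$ being independent of $r$). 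Hölder's inequality on $A_{r,s}$ gives, for a.e.\ $t$,
$$
\|u_r(t)\|_{L^2(A_{r,s})}\le c\,|A_{r,s}|^{1/4}\|u_r(t)\|_{H^1(\R^2)},\qquad
\|\nabla u_r(t)\|_{L^2(A_{r,s})}\le c\,|A_{r,s}|^{1/4}\|u_r(t)\|_{H^2(\Omega_r)},
$$
and integrating in $t$ and using the uniform bounds yields $\|u_r\|_{L^2(0,T;H^1(A_{r,s}))}\le c\,s^{1/2}$ with $c$ independent of $r$ (for $r<s$).

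Putting this together: given $\epsilon>0$, first choose $s$ so small that $\|u\|_{L^2(0,T;H^1(A_{r,s}))}+\|u_r\|_{L^2(0,T;H^1(A_{r,s}))}<\epsilon/2$ for all $r<s$; then, with this $s$ fixed, choose $r_0<s$ so that $\|u-u_r\|_{L^2(0,T;H^1(\Omega_s))}<\epsilon/2$ for all $r<r_0$ by Lemma~\ref{lem:conv}. The displayed identity then gives $\|u-u_r\|_{L^2(0,T;H^1(\Omega_r))}<\epsilon$ for $r<r_0$, which is the assertion. The only delicate point is the uniform-in-$r$ control of $u_r$ on the shrinking annulus $A_{r,s}$: this is exactly where the $r$-independent estimates of Lemma~\ref{lem:uniform} and the $r$-independence of the Sobolev constants for the exterior domains $\Omega_r$ are needed — without them the $H^1$ mass of $u_r$ could a priori concentrate near the origin as $r\to 0$.
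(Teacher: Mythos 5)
Your proposal is correct and follows essentially the same route as the paper: split $\Omega_r$ into the fixed exterior $\Omega_s$ (handled by the strong convergence of Lemma~\ref{lem:conv}) and the thin annulus $\Omega_r\setminus\Omega_s$, controlled via H\"older, the $r$-independent Sobolev embedding for $\Omega_r$, and the uniform $H^2$ bound, followed by the same $\epsilon/2$ quantifier ordering. The only cosmetic difference is that the paper estimates $u-u_r$ on the annulus directly through its $W^{1,4}$ norm, whereas you split off the $u$-term by absolute continuity of the integral; both yield the same conclusion.
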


\begin{proof}

Let $0<r<s$. We can write
\begin{align*}
\int_0^T\|u&-u_{r}\|^2_{H^1(\Omega_{r})}\,\ud t\\
&=\int_0^T\|u-u_{r}\|^2_{H^1(\Omega_{s})}+\|u-u_{r}\|^2_{H^1(\Omega_{r}\setminus\Omega_{s})}\,\ud t\\
&\le \int_0^T\|u-u_{r}\|^2_{H^1(\Omega_{s})}+c\,s\,\|u-u_{r}\|^2_{W^{1,4}(\Omega_{r}\setminus\Omega_{s})}\,\ud t\\
&\le \|u-u_{r}\|^2_{L^2(0,T;H^1(\Omega_{s}))}+c\,s\,\|u-u_{r}\|^2_{L^2(0,T;H^2(\Omega_{r}))}
\end{align*}
using H\"older inequality in the second line. 
Since $\|u-u_{r}\|^2_{L^2(0,T;H^2(\Omega_{r}))}\le C$ with $C$ independent of $r$, 
for any $\epsilon>0$ there exists a $\delta$ (independent of $r$) such that $s<\delta$ is small enough to ensure that
$$
c\,s\,\|u-u_{r}\|^2_{L^2(0,T;H^2(\Omega_{r}))}<\epsilon/2.
$$
Also, by Lemma \ref{lem:conv} for $r$ small enough
$$
\|u-u_{r}\|^2_{L^2(0,T;H^1(\Omega_{s}))}<\epsilon/2.
$$
The result therefore follows.\qed

\end{proof}


\section{The equations at the zero limit of the rigid body radius}\label{sec:limiteqn}

Now we can prove the main result of this paper:

\begin{theorem} \label{thm:limeqn}

Let (\ref{e:asp}) hold and consider $(u_r,h_r)$ to be a solution of (\ref{fluid-newton})-(\ref{rigid-ic}) 
with the initial condition $u_r(0)\in H^1(\R^2)$ satisfying 
$u_r(0)=\dot{h}_r(0)$ 
for $y\in\paro_r$ and $\dot{h}_r(0)\in\R^2$. 
Suppose also that there exists $u_0\in H^1(\R^2)$ such that $u_r(0)\rightharpoonup u_0$ in $L^2(\R^2)$ as $r\to 0$ and $\|u_r(0)\|_{H^1(\Omega_r)}\le c\|u_0\|_{H^1(\R^2)}$ for any $r>0$. 
Then
\begin{itemize}
\item[i)] $u$, the limit of $u_r$ at the zero limit of $r$ satisfies the Navier-Stokes equations in the whole plane with initial condition $u_0$,

\item[ii)] the trajectory of the centre of the rigid disc converges to a fluid particle trajectory as $r\to 0$.
\end{itemize}
\end{theorem}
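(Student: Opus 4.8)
The plan is to pass to the limit $r\to 0$ in the functional form \eqref{eq:functional} of the coupled system, using the uniform bounds of Lemma~\ref{lem:uniform} and the strong convergence of Lemma~\ref{lem:conv}, and the key point will be to handle the projection operator $\OP_r$ via Lemma~\ref{lem:P-Pr}. Concretely, I would test \eqref{eq:functional} against a divergence-free $\varphi\in C_0^\infty(\R^2)$ supported away from the origin; for $r$ small the support avoids $B_r$, so $\OP_r\varphi=\varphi$ and $\beta(u_r-\dot h_r,u_r)$ reduces to the genuine convection term on the support. The viscous term $\langle A_r u_r,\varphi\rangle = 2\nu\int D(u_r):D(\varphi)$ converges to $2\nu\int D(u):D(\varphi)=\nu\int \nabla u:\nabla\varphi$ using the strong $H^1(\Omega_s)$ convergence; the time-derivative term passes to the limit by the weak convergence of $\dot u_r$ in $L^2(0,T;L^2)$; and the nonlinear term $\int ((u_r-\dot h_r)\cdot\nabla)u_r\cdot\varphi$ converges because $u_r\to u$ strongly in $L^2(0,T;H^1(\Omega_s))$, $u_r\to u$ strongly in $L^2(0,T;L^2)$, and $\dot h_r\to \dot h$ in $L^2(0,T;\R^2)$ — so one must first show the limit of $\dot h_r$ is forced to be $u(h(t),t)$ along the particle trajectory. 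Since $\varphi$ with support away from the origin is dense (in the relevant sense) among divergence-free test functions on $\R^2$ — a single point has zero capacity in two dimensions — the limiting identity $\partial_t u - \nu\Delta u + (u\cdot\nabla)u + \nabla p = 0$, $\nabla\cdot u=0$ holds on all of $\R^2$. For the initial condition, the assumption $u_r(0)\rightharpoonup u_0$ in $L^2$ together with $u_r\in C(0,T;L^2)$ and the uniform bounds gives $u(0)=u_0$; by the uniqueness of strong solutions of the 2D Navier-Stokes equations in the whole plane with $H^1$ data (as cited in the paper's introduction, following e.g.\ the standard theory), $u$ is \emph{the} strong solution, which also upgrades the subsequential convergence to full convergence as in the remark after Lemma~\ref{lem:conv}.

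For part (ii) I would identify the limiting ODE satisfied by $h$. Passing to the limit in \eqref{rigid-newton} directly is awkward because the stress integral is $O(1)$ while $M=\rho\pi r^2\to 0$; instead I would use the boundary condition $u_r = \dot h_r$ on $\paro_r$ (recall $\omega\equiv 0$), which after integration in time gives $h_r(t)=\int_0^t \dot h_r(s)\,\ud s$, and the already-established $\dot h_r\to\dot h$ in $L^2(0,T)$, so $h_r\to h$ in $C([0,T])$ with $\dot h = $ (limit). The remaining task is to show $\dot h(t) = u(h(t),t)$ for a.e.\ $t$, i.e.\ that $h$ solves the characteristic ODE $\dot h(t)=u(h(t),t)$, $h(0)=0$, whose solution is exactly a fluid particle trajectory (this ODE is well-posed since $u\in L^2(0,T;H^2(\R^2))\cap L^\infty(0,T;H^1(\R^2))$ embeds into $L^1(0,T;C^{0,\gamma})$ with a log-Lipschitz or better modulus, guaranteeing a unique flow). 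To get $\dot h_r(t)\to u(h(t),t)$: on $\paro_{r}$ we have $\dot h_r(t)=u_r(h_r(t)+r\bn,t)$ for every unit $\bn$; average over $\bn$, then estimate $|u_r(h_r(t)+r\bn,t)-u(h(t),t)| \le |u_r(h_r(t)+r\bn,t)-u(h_r(t)+r\bn,t)| + |u(h_r(t)+r\bn,t)-u(h(t),t)|$, where the first difference is controlled in $L^2_t$ by $\|u_r-u\|_{L^2(0,T;H^{3/2}(\Omega_{r_0}))}$-type bounds (as in the proof of Lemma~\ref{lem:conv}, using a trace/Sobolev estimate near $\paro_r$ together with $r^{\alpha/2}$ smallness) and the second by the modulus of continuity of $u$ together with $h_r\to h$ uniformly and $r\to 0$.

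The main obstacle I expect is making the passage to the limit in the nonlinear term and in the projection fully rigorous \emph{uniformly} near the shrinking hole: although $\OP_r\varphi=\varphi$ once $r$ is small relative to $\mathrm{dist}(0,\mathrm{supp}\,\varphi)$, one still needs $\beta(u_r-\dot h_r,u_r)=((u_r-\dot h_r)\cdot\nabla)u_r$ on the support and must control the product of a strongly-$H^1$-convergent factor with a strongly-$L^2$-convergent factor against a smooth test function — routine once the convergences are in hand, but one should be careful that the convergence $u_r\to u$ is only on $\Omega_s$ for each fixed $s>0$, so the test function's support being a fixed compact set away from $0$ is what saves the argument. A secondary technical point is justifying that $u$ inherits enough regularity ($L^2(0,T;H^2(\R^2))$, already done in Lemma~\ref{lem:conv}, plus the pressure $p\in L^2(0,T;H^1_{loc})$ recovered a posteriori from de Rham's lemma applied to the limiting equation) so that the standard 2D Navier-Stokes uniqueness theory applies and the flow map defining the particle trajectory exists and is unique. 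Once those are secured, both (i) and (ii) follow, and the full (non-subsequential) convergence is obtained by the contradiction argument already sketched after Lemma~\ref{lem:conv}.
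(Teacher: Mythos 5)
Your overall strategy for part (i) — uniform bounds from Lemma \ref{lem:uniform}, strong convergence from Lemma \ref{lem:conv}, passage to the limit in \eqref{eq:functional}, identification of the initial datum, uniqueness of the 2D Navier--Stokes solution to upgrade subsequential convergence — matches the paper, and your part (ii) argument (boundary condition $u_r=\dot h_r$ on $\paro_r$, H\"older continuity of $u$ from $H^{3/2}$/$H^2$, interpolation of $\|u-u_r\|_{H^1}$ and $\|u-u_r\|_{H^2}$, plus $\dot h_r\to\dot h$) is essentially the paper's. Your device of testing only against divergence-free $\varphi\in C_0^\infty$ supported away from the origin, and then invoking the zero capacity of a point, is a legitimate alternative to the paper's route (the paper instead proves weak-$*$ convergence of $A_ru_r$, of the projected nonlinearity and of $\ud u_r/\ud t$ against all of $L^2(0,T;V)$, using Lemmas \ref{lem:trace} and \ref{lem:P-Pr} to control $v-\OP_r v$, and then a density argument to get an identity in $L^2(0,T;H)$); your route avoids Lemma \ref{lem:P-Pr} but would need the removable-singularity step made precise (e.g.\ stream-function cutoffs $\nabla^\perp(\chi_\epsilon\psi)$ with $\|\nabla\chi_\epsilon\|_{L^2}\to0$), including the de Rham/pressure point on the punctured plane.

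There are, however, two genuine gaps. First, a frame confusion: \eqref{eq:functional} is written in coordinates moving with the disc, so the limit of the nonlinear term is $((u-\dot h)\cdot\nabla)u$ and the limiting identity you obtain is \eqref{eq:nseh}, \emph{not} $\partial_t u-\nu\Delta u+(u\cdot\nabla)u+\nabla p=0$. The extra transport term $-(\dot h\cdot\nabla)u$ is removed by the change of variables $U(x,t)=u(x-h(t),t)$ (which is exactly what the paper does to pass from \eqref{eq:nseh} to \eqref{eq:nsel}), and \emph{not} by first proving $\dot h(t)=u(h(t),t)$: that identity neither cancels the term nor is available at this stage — it is the content of part (ii), which is established only after the limit equation is in hand. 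As written, your claimed limit PDE is wrong for the moving-frame velocity and your proposed remedy is circular.

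Second, you prove the theorem only on the short interval $[0,T]$ with $T$ as in \eqref{eq:T1}, which depends on $\|u_0\|_{H^1}$; the theorem (and the paper's proof) asserts the convergence and the limit equation for all times. The paper closes this by a continuation argument: the limiting solution of the 2D Navier--Stokes equations obeys the global $H^1$ bound \eqref{eq:H1u}; combining this with the strong convergence $\|u-u_r\|_{L^2(0,T;H^1(\Omega_r))}\to0$ one finds a time $t$ close to $T$ at which $\|u_r(t)\|_{H^1(\Omega_r)}$ is again bounded by a fixed constant uniformly in small $r$, so Lemma \ref{lem:uniform} can be restarted there, extending the uniform bounds and hence the convergence to $[0,T+T^*/2]$, and iterating gives every interval $[0,T+nT^*/2]$. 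Nothing in your proposal performs this globalization, and without it the conclusion holds only locally in time.
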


\begin{proof}
{\it i}) We first show the result for $t\in [0,T]$, with $T$ as in (\ref{eq:T1}), and then use
the properties of the solution of the Navier-Stokes equations in the whole of $\R^2$ to
get the result for any $0\le t<\infty$. In what follows 
$$H=\{u\in [L^2(\R^2)]^2: \cdiv u=0\}$$
and
$$V=\{u\in [H^1(\R^2)]^2: \cdiv u=0\}.$$
$V^*$ is the dual of $V$,
and 
\begin{align*}
(u,v)_{\Omega_r}=\int_{\Omega_r}u\cdot v\,\ud y.\\
\nonumber
\end{align*}
%

%
1) $\int_0^T\int_{\Omega_r} (A_ru_r-Au)\cdot v\,\ud y\,\ud t \to 0\;$ as $r\to 0$ for any $v\in L^2(0,T;H)$:\\

We show the weak-* convergence of $A_r u_r$ to $Au$ in $L^2(0,T,V^*)$ 
and then use the fact that $\|u_r\|_{L^2(0,T;H^2(\Omega_r))}$ 
and $\|u\|_{L^2(0,T;H^2(\Omega_r))}$ are bounded independent of $r$,
to conclude the required convergence for and $v\in L^2(0,T;H)$.

Let $v\in L^2(0,T;V)$ and $v_r={\OP}_rv$ and write
\begin{align}\label{Au-weak*}
\int_0^T (A_r u_r-Au,v)\,\ud t
&= \int_0^T(\cA_r u_r,v_r)\,\ud t+\int_0^T (\nu\Delta u,v)\,\ud t\nonumber\\
&= \int_0^T(\mathcal{A}_ru_r+\nu\Delta u,v)\,\ud t
+\int_0^T(\mathcal{A}_r u_r,v_r-v)\,\ud t.
\end{align}
We have
\begin{align*}
\int_0^T (\mathcal{A}_r u_r,v_r-v)\,\ud t
&\le\; c\,\|u_r\|_{L^2(0,T;H^2(\Omega_r))}\|v_r-v\|_{L^2(0,T;L^2(\Omega_r))}\\
&\;+ \int_0^T\frac{c}{r^2}\int_{\partial\Omega_r}D(u_r){\bf n}\,\ud\Gamma
            \cdot\int_{B_r}(v_r-v)\,\ud y\,\ud t\\
&\;+ \int_0^T\frac{c}{r^3}
         \int_{\partial\Omega_r}{\bf n}^{\bot}\cdot D(u_r){\bf n}\,\ud \Gamma
         \int_{B_r}(v_r-v)\cdot y^{\bot}\,\ud y\,\ud t.
\end{align*}
By characterization of $G_2$ in Lemma \ref{lem:G1G2}, the last term is zero. 
For the second term in the right-hand side of the above inequality, 
by Lemma \ref{lem:trace} and \ref{lem:P-Pr} we can write
\begin{align*}
\frac{c}{r^2}\int_{\paro_r}&D(u_r){\bf n}\,\ud y
        \cdot\int_{B_r}(v_r-v)\,\ud y\\
&=\frac{c}{r^2}\int_{\paro_r}D(u_r){\bf n}\,\ud y
        \cdot\lt(-\int_{B_r}(v_r-v)\,\ud y\rt)\\
&\le\, 
       \frac{c}{r^2}\,\|D(u_r)\|_{L^2(\paro_r)}\,(2\pi r)^{1/2}\,
       \|v-v_r\|_{L^2(B_r)}\,(\pi r^2)^{1/2}\\
&\le\,
       {c}{r^{\alpha-1/2}}\,\|u_r\|_{H^2(\Omega_r)}
                      \,\|v\|_{H^1(\R^2)}.
\end{align*}
for any $\alpha<1$, and therefore 
\begin{align*}
\int_0^T(\mathcal{A}_r u_r,v_r-v)\,\ud t\to 0\quad\mbox{as}\quad r\to 0.
\end{align*}
For the integrand in the first term of the last line of (\ref{Au-weak*}) we can write
\begin{align*}
(\mathcal{A}_r u_r+\nu\Delta u,v)=
&-2\nu\int_{\Omega_r} (D(u_r)-D(u)):D(v)\,\ud y\\
&+2\nu\int_{\paro_r} (D(u_r)-D(u))\,v\cdot{\bf n}\,\ud \Gamma
     +\nu\int_{B_r} \Delta u\cdot v\,\ud y\\
&-\frac{2\nu}{M}\int_{B_r}v\,\ud y\cdot
          \int_{\paro_r}D(u_r)\bn\,\ud\Gamma\\
&-\frac{2\nu}{J}r \int_{B_r}v\cdot y^{\bot}\,\ud y 
         \int_{\partial B_r}{\bf n}^{\bot}\cdot D(u_r){\bf n}\,\ud\Gamma.
\end{align*}
Since with $0<\alpha<1$,
\begin{align*}
\frac{2}{M}\int_{B_r}v\,\ud y\,\cdot
&\int_{\paro_r}|D(u_r)\,\,\ud\Gamma\\
&\le\, 
     \frac{c}{r^2}\|v\|_{L^6(B_r)}{(\pi r^2)^{5/6}}\,\|Du_r\|_{L^2(\partial B_r)}\,\sqrt{2\pi r}\\
&\le\, c\,r^{\alpha/2+1/6}\,\|v\|_{H^1(\R^2)}\,\|u_r\|_{H^2(\Omega_r)} ,
\end{align*}
\begin{align*}
\frac{2}{J}r \int_{B_r}v\cdot y^{\bot}\,\ud y
&\int_{\partial B_r}{\bf n}^{\bot}\cdot D(u_r){\bf n}\,\ud\Gamma\\
&\le\, \frac{c}{r^3}\|v\|_{L^4}\,(r^4)^{3/4}
               \,\|Du_r\|_{L^2(\partial B_r)}\sqrt{2\pi r}\\
&\le\, c\,r^{\alpha/2+1/2}\,\|v\|_{H^1(\R^2)}\,\|u_r\|_{H^2(\Omega_r)} ,
\end{align*}
and
\begin{align*}
\int_{B_r} \Delta u\cdot v\,\ud y
&\le\, \|u\|_{H^2(\R^2)}\,\|v\|_{L^4(B_r)}|B_r|^{1/4}\\
&\le\, c\,r^{1/2}\,\|v\|_{H^1(\R^2)}\,\|u_r\|_{H^2(\Omega_r)},
\end{align*}
we have
\begin{align*}
\int_0^T (\mathcal{A}_r u_r+\nu\Delta u,v)\,\ud t
&\,\le\, c\,\|u-u_r\|_{L^2(0,T;H^1(\R^2))}\,\|v\|_{L^2(0,T;H^1(\R^2))}\\
&\,+c\,r^{\alpha}\|u_r-u\|_{L^2(0,T;H^2(\R^2))}\,\|v\|_{L^2(0,T;H^1(\R^2))}\\
&\;+c\,r^{1/2}\,\|u_r\|_{L^2(0,T;H^2(\R^2))}\,\|v\|_{L^2(0,T;H^1(\R^2))}
\end{align*}
which converges to zero as $r$ tends to zero. We conclude that 
$A_r u_r-Au\overset{*}{\rightharpoonup}0$ in $L^2(0,T;V^*)$.

Now to prove the required convergence for $v\in L^2(0,T;H)$, we note that since $L^2(0,T;V)$ is dense in $L^2(0,T;H)$, for any $w\in L^2(0,T;H)$, there exists a sequence $\{v_m\}_{m=1}^\infty$ in $L^2(0,T;V)$, such that $v_m\to w$ in $L^2(0,T;H)$ as $m\to \infty$. We can write
\begin{align*}
&\lt|\int_0^T(A_ru_r-Au,w)_{\Omega_r}\,\ud t\,\rt|\\
&\quad\le \lt|\int_0^T(A_r u_r-Au,w-v_m)_{\Omega_r}\,\ud t\,\rt|
   +\lt|\int_0^T(A_ru_r-Au,v_m)_{\Omega_r}\,\ud t\,\rt|\\
&\quad\le \|A_r u_r-Au\|_{L^2(0,T;L^2(\Omega_r))}\|w-v_m\|_{L^2(0,T;L^2(\Omega_r))}\\
&\hspace{2in}+\lt|\int_0^T(A_r u_r-Au,v_m)_{\Omega_r}\,\ud t\,\rt|.
\end{align*}
For any $\epsilon>0$, there exists $M$ big enough such that $\|w-v_m\|\le \epsilon/(2C)$ for $m>M$ and with 
$C\ge \|A_r u_r-Au\|_{L^2(0,T;H(\Omega_r))}$. Also by the weak-* convergence we just showed, 
there exists some $r=r(M)$ such that 
$$
\lt|\int_0^T(A_r u_r-Au,v_m)_{\Omega_r}\,\ud t\,\rt|<\epsilon/2,\quad \mbox{for }r<r(M).
$$
Therefore for any $\epsilon>0$, 
$$
\lt|\int_0^T(A_r u_r-Au,w)_{\Omega_r}\,\ud t\,\rt|<\epsilon
$$
when $r$ is small enough.
\\

%
2) $\OP_r[\beta(u_r-\dot{h}_r,u_r)]\rightharpoonup \OP[((u-\dot{h})\cdot\nabla)u]$ in $L^2(0,T;H)$:\\

We note that $\OP[((u-\dot{h})\cdot\nabla)u]$ is bounded in $L^2(0,T;H)$, since
\begin{align}\label{eq:Buu}
\|\OP[((u-\dot{h})\,\cdot &\nabla)u]\|^2_{L^2(0,T;H)}\nonumber\\
&\le\, \|(u\cdot \nabla)u\|^2_{L^2(0,T;H)}+\|(\dot{h}\cdot \nabla)u\|^2_{L^2(0,T;H)}\nonumber\\
&\le\, \|u\|_{L^\infty(0,T;H^1(\R^2))}\,\|u\|_{L^2(0,T;H^2(\R^2))}\nonumber\\
&\qquad\quad +\|\dot{h}\|_{L^4(0,T;\R^2)}\,\|u\|_{L^4(0,T;H^1(\R^2))}.
\end{align}

We first show the weak-* convergence for $L^{4/3}(0,T;V^*)$. Let $v\in L^4(0,T;H^1)$ and $v_r=\OP_r v$ as before and write
\begin{align*}
\int_0^T 
&\big(\OP_r[\beta(u_r-\dot{h}_r,u_r)]- \OP[((u-\dot{h})\cdot\nabla)u],v\big)\,\ud t\\
&=\int_0^T \big([(u_r-\dot{h}_r)\cdot\nabla]u_r,v_r\big)_{\Omega_r}\,\ud t
                                      +\int_0^T \big([(u-\dot{h})\cdot\nabla]u,v\big)\,\ud t\\
&=\int_0^T \big([(u_r-\dot{h}_r)\cdot\nabla]u_r-[(u-h')\cdot\nabla]u,v\big)_{\Omega_r}\,\ud t\\
&+\int_0^T \big([(u_r-\dot{h}_r)\cdot\nabla]u_r,v_r-v\big)_{\Omega_r}\,\ud t
      +\int_0^T \big([(u-\dot{h})\cdot\nabla]u,v\big)_{B_r}\,\ud t.
\end{align*}
We have
\begin{align*}
&\int_0^T\big( ((u_r-\dot{h}_r)\cdot\nabla)u_r,v_r-v \big)_{\Omega_r}\,\ud t\\
&\qquad\,\le\, \Big( \|u_r\|_{L^\infty(0,T;H^1(\Omega_r))}\|u_r\|_{L^2(0,T;H^2(\Omega_r))}\\
&\qquad\qquad +\|\dot{h}\|_{L^2(0,T;\R^2)}\|u_r\|_{L^\infty(0,T;H^1(\Omega_r))} \Big)\|v-v_r\|_{L^2(0,T;L^2(\Omega_r))},
\end{align*}
\begin{align*}
\int_0^T\big( &(\dot{h}_r\cdot \nabla)u_r - (\dot{h}\cdot\nabla)u,v\big)_{\Omega_r}\,\ud t\\
&=\int_0^T\big( ((\dot{h}_r-\dot{h})\cdot \nabla)u_r,v \big)_{\Omega_r}
              + \big( (\dot{h}\cdot\nabla)(u_r-u),v \big)_{\Omega_r}\,\ud t\\
&\le \|\dot{h}_r-\dot{h}\|_{L^2(0,T;\R^2)}\,\|u_r\|_{L^\infty(0,T;H^1(\Omega_r))}\,\|v\|_{L^2(0,T;L^2(\R^2))}\\
&+\|\dot{h}\|_{L^4(0,T;\R^2)}\,\|u_r-u\|_{L^2(0,T;H^1(\Omega_r))}\,\|v\|_{L^4(0,T;L^2(\R^2))},
\end{align*}
with $w=u_r-u$
\begin{align*}
\int_0^T \big( &(u_r\cdot\nabla)u_r-(u\cdot\nabla)u,v \big)_{\Omega_r}\,\ud t\\
&=\int_0^T \big( (w\cdot \nabla)w),v\big)_{\Omega_r}
                +\big( (w\cdot \nabla)u),v\big)_{\Omega_r}
                +\big( (u\cdot \nabla)w),v\big)_{\Omega_r}\,\ud t\\
&\le c\,\Big( \|w\|_{L^\infty(0,T;H^1(\Omega_r))}+\|u\|_{L^\infty(0,T;H^1(\Omega_r))} \Big)\,\\
&\qquad\qquad\qquad \times \|w\|_{L^2(0,T;H^1(\Omega_r))}\,\|v\|_{L^2(0,T;H^1(\Omega_r))},
\end{align*}
and 
\begin{align*}
\int_0^T &\big([(u-\dot{h})\cdot\nabla]u,v\big)_{B_r}\,\ud t\\
&\le \int_0^T c\,\Big( \|u\|_{H^1(\R^2)}\,\|u\|_{H^2(\R^2)}+|\dot{h}|\,\|u\|_{H^1(B_r)} \Big)\,\|v\|_{L^4(B_r)}\,|B_r|^{1/4}\\
&\le c\,r^{1/2}\,\Big( \|\dot{h}\|_{L^2(0,T;\R^2)}
             +\|u\|_{L^2(0,T;H^2(\R^2))} \Big)\,\\
&\qquad\qquad\qquad \times\|u\|_{L^\infty(0,T;H^1(\R^2))}\,\|v\|_{L^2(0,T;H^1(\R^2))}
\end{align*}
We therefore conclude the required weak-* convergence. The weak convergence in $L^2(0,T;H)$ then follows 
in a similar way to the previous case, since both $\OP_r[\beta(u_r-\dot{h}_r,u_r)]$ and $\OP[((u-\dot{h})\cdot\nabla)u]$ 
are bounded in $L^2(0,T;H)$ and $L^4(0,T;V)$ is dense in $L^2(0,T;H)$.
\\

%
3) $\int_0^T\big( \ud (u_r-u)/\ud t,v \big)_{\Omega_r}\,\ud t\to 0\;$ 
as $r\to 0$ for any $v\in L^2(0,T;H)$:\\

As before, we first show the convergence for more regular $v\in L^2(0,T;V)$.
For $s>r$ we can write
\begin{align*}
&\int_0^T \lt( \frac{\ud u_r}{\ud t}-\frac{\ud u}{\ud t},v \rt)_{\Omega_r}\,\ud t\\
&\;=\int_0^T \lt( \frac{\ud u_r}{\ud t}-\frac{\ud u}{\ud t},v \rt)_{\Omega_s}\,\ud t
+\int_0^T \lt( \frac{\ud u_r}{\ud t}-\frac{\ud u}{\ud t},v \rt)_{\Omega_r\setminus\Omega_s}\,\ud t.
\end{align*}
For $v\in L^2(0,T;V)$ we have
\begin{align*}
&\int_0^T \lt( \frac{\ud u_r}{\ud t}-\frac{\ud u}{\ud t},v \rt)_{\Omega_r\setminus\Omega_s}\,\ud t\\
&\quad\le \int_0^T \lt\| \frac{\ud u_r}{\ud t}-\frac{\ud u}{\ud t} \rt\|_{L^2({\Omega_r\setminus\Omega_s})}
             |{\Omega_r\setminus\Omega_s}|^{1/4}\,\|v\|_{L^4({\Omega_r\setminus\Omega_s})}\\
&\quad \le c\, s^{1/2} \|v\|_{L^2(0,T;V)} 
\end{align*}
since $\|\dot{u}_r\|_{L^2(0,T;L^2(\Omega_r))}$ is uniformly bounded by Lemma \ref{lem:uniform}
and $\|\dot{u}\|_{L^2(0,T;L^2(\R^2))}$ is bounded by Lemma \ref{lem:conv}. 
Therefore, for any $\epsilon$ there exists $\delta>0$ such that for any $r$ and $s$ less than $\delta$
$$
\int_0^T \lt( \frac{\ud u_r}{\ud t}-\frac{\ud u}{\ud t},v \rt)_{\Omega_r\setminus\Omega_s}\,\ud t<\epsilon/2.
$$ 
By Lemma \ref{lem:conv}, $\dot{u}_r\rightharpoonup \dot{u}$ 
as $r\to 0$ in $L^2(0,T;L^2(\Omega_s))$ for any fixed $s$ and hence there exists $r^*>0$ such that
$$
\int_0^T \lt( \frac{\ud u_r}{\ud t}-\frac{\ud u}{\ud t},v \rt)_{\Omega_s}\,\ud t\le \epsilon/2
$$
for $r<r^*$. Let $r=min\{\delta/2,r^*\}$. Then
\begin{align*}
\int_0^T \lt( \frac{\ud u_r}{\ud t}-\frac{\ud u}{\ud t},v \rt)_{\Omega_r}\,\ud t\le \epsilon
\end{align*}
and we get the required convergence for $v\in L^2(0,T;V)$.

The converegence for $v\in L^2(0,T;H)$ follows in a similar way to the parts (1) and (2), since 
$\|\dot{u}_r\|_{L^2(0,T;L^2(\Omega_r))}$ and $\|\dot{u}\|_{L^2(0,T;L^2(\R^2))}$ are both bounded by a constant independent of $r$.\\


The above convergence results and (\ref{eq:functional}) imply that $u(t)$ satisfies
\begin{align*}
\int_0^T\int_{\Omega_r}\lt(\frac{\ud u}{\ud t}+Au+\OP[((u-\dot{h})\cdot\nabla)u]\rt)
\cdot v \,\ud y\,\ud t\to 0\quad\mbox{as }\,r\to 0.
\end{align*}
for any $v\in L^2(0,T;H)$. Since $\ \dot{u},Au\in L^2(0,T;H)$ by Lemma \ref{lem:conv}$\ $ 
and $\OP[((u-\dot{h})\cdot\nabla)u]\in L^2(0,T;H)$ by (\ref{eq:Buu}), in the zero limit of $r$, we have
\begin{align}\label{eq:nseh}
\frac{\ud u}{\ud t}+Au+\OP[((u-\dot{h})\cdot\nabla)u]=0\quad\mbox{as an equality in }L^2(0,T;H).\\
\nonumber
\end{align}

We need to show the limiting velocity field $u$ has initial condition $u_0$.
Let $\phi\in C^1(0,T;H)$ with $\phi(T)=0$. Taking the inner product 
of the above equation and (\ref{eq:functional}) with $\phi(t)$ we obtain
$$
\int_0^T (u,\dot\phi)_{\Omega_r}+( Au+\OP[((u-\dot{h})\cdot\nabla)u],\phi )_{\Omega_r}
\,\ud t=(u(0),\phi(0))_{\Omega_r}
$$
and
$$
\int_0^T (u_r,\dot\phi)_{\Omega_r}+( A_ru_r+\OP_r[\beta(u-\dot{h},u)],\phi )_{\Omega_r}\,\ud t=(u_r(0),\phi(0))_{\Omega_r}
$$
respectively. Comparing these two at the zero limit of $r$ implies that $u(0)=u_0$.\\

Since $h\in H^1(0,T;\R^2)\subset L^\infty(0,T;\R^2)$, we can perform the following change of coordinates in (\ref{eq:nseh})
\begin{align*}
x(t)=y(t)+h(t).
\end{align*}
Letting
$$
U(x,t)=u(x+h(t),t)
$$
we can write (\ref{eq:nseh}) as
\begin{align}\label{eq:nsel}
\frac{\ud U}{\ud t}+AU+\OP[(U\cdot\nabla)U]=0\quad\mbox{as an equality in }L^2(0,T;H).\\
\nonumber
\end{align}
%
%

Now, we extend the result to all times:\\

Taking the inner product of the above equation with $AU$, using appropriate 
estimates for the nonlinear term and then integrating, we obtain
\begin{align}\label{eq:H1u}
\|u\|_{H^1(\R^2)}\le \frac{C_1}{2}=c\,\e^{c\|u_0\|_{L^2}}\,\|u_0\|_{H^1(\R^2)}.
\end{align}
Let $\|u_r(0)\|_{H^1(\Omega_r)}=C_1$. By Lemma {\ref{lem:uniform}}, we have
$\|u_r(t)\|_{H^1(\Omega_r)}<2C_1$ for 
$$
t<T^*=\frac{1}{2c(1+\|u_r(0)\|_{H_r}^4)\,(1+C_1^2)^2}<T
$$
(noting that $\|u_r(t)\|_{H_r}\le \|u_r(0)\|_{H_r}$ for any $t>0$). Since 
$$
\int_0^T\|u_r-u\|^2_{H^1(\Omega_r)}\to 0\quad \mbox{as } r\to 0, 
$$
there exists $r^*$ small enough such that 
for some $t>T-T^*/2$ 
$$
\|u_{r^*}(t)-u(t)\|_{H^1(\Omega_{r^*})}<C_1/2
$$
and therefore $\|u_{r^*}(t)\|_{H^1(\Omega_{r^*})}\le C_1$ and by above for $t<T+T^*/2$, $\|u_r(t)\|_{H^1(\Omega_r)}$ is uniformly bounded.
Hence, by the first part of the proof $u$ satisfies (\ref{eq:nseh}) for $t\in[0,T+T^*/2]$. 
The same estimate (\ref{eq:H1u}) holds for $0\le t<T+T^*/2$ as well. 
Therefore the same argument as above
gives the result of the theorem for $0\le t<T+T^*$ and continuing in a similar way for $0\le t<T+nT^*/2$ 
for any $n\in\mathbb{N}$.\\

{\it ii}) To show the convergence of the trajectory of the centre of the tracer, we fix a point $y^*$ on $\paro_r$ and write
\begin{align*}
|u((0,0),t)-\dot{h}(t)|
&\le |u((0,0),t)-u(y^*,t)|\\
&\quad+|u(y^*,t)-\dot{h}_r(t)|+|\dot{h}_r(t)-\dot{h}(t)|.
\end{align*}
We have
$$
|u((0,0),t)-u(y^*,t)|\le c\,r^{\alpha/2}\,\|u\|_{H^{3/2}(\R^2)}
$$
for $\alpha<1$ by Lemma 7.26 of Renardy \& Rogers (2004).
To find an appropriate bound on the second term in the right-hand side of the above inequality, we note that
for any $\bn=y/|y|$ with $y\in \paro_r$ we can write
\begin{align*}
|\bn\cdot(u(y^*,t)&-\dot{h}_r(t))|\\
&=|\bn\cdot(u(y^*,t)-u_r(y,t))|\\
&\le |u(y^*,t)-u(y,t)|+|u(y,t)-u_r(y,t)|\\
&\le c\,r^{\alpha/2}\,\|u\|_{H^{3/2}(\R^2)}+c\,\|u-u_r\|_{H^1(\Omega_r)}^{1/2}\,\|u-u_r\|_{H^2(\Omega_r)}^{1/2}.
\end{align*}
Since the above inequality is true for all $\bn$ as above, we conclude that
\begin{align*}
|u(y^*,t)-\dot{h}_r(t)|
&\le c\,r^{\alpha/2}\,\|u\|_{H^{3/2}(\R^2)}\\
&\quad+c\,\|u-u_r\|_{H^1(\Omega_r)}^{1/2}\,\|u-u_r\|_{H^2(\Omega_r)}^{1/2}.
\end{align*}
Therefore we have
\begin{align*}
\int_0^T 
&|u((0,0),t)-\dot{h}(t)|^2\,\ud t\\
&\le c\,r^{\alpha}\,\|u\|_{L^\infty(0,T;H^1(\R^2))}\,\|u\|_{L^2(0,T;H^2(\R^2))}\\
&+c\,\|u-u_r\|_{L^2(0,T;H^2(\Omega_r))}\,\|u-u_r\|_{L^2(0,T;H^1(\Omega_r))}+\|\dot{h}_r-\dot{h}\|_{L^2(0,T;\R^2)}.
\end{align*}
The right-hand side converges to zero as $r\to 0$ by Lemma \ref{lem:conv}. Hence
\begin{align}\label{eq:hU}
\int_0^T |\dot{h}(t)-U(h(t),t)|\,\ud t=0,
\end{align}
implying that $h(t)$ is the trajectory of the fluid particle initially at $(0,0)$.
Since 
\begin{align*}
|h(t)-h_r(t)|
\le\|h-h_r\|_{H^1(0,T;\R^2)},
\end{align*}
we conclude that $h_r(t)\to h(t)$ as $r\to 0$ by Lemma (\ref{lem:conv}) . 
\qed
\end{proof}

We note that there exist $u_0$ and $u_r(0)$ satisfying the assumption of
Theorem {\ref{thm:limeqn}}. For instance we can consider $u_0\in H^1(\R^2)$ such that $u_0=\dot{h}$ for $y\in B_{R}$ with
$R>0$ and some $\dot{h}\in\R^2$, and let $u_r(0)=u_0$ for any $r<R$.  

\section{Conclusion}

We studied the limiting motion 
of a system of a rigid disc moving with a fluid flow in $\R^2$ as
the radius of the disc goes to zero.
We showed that if the disc is not allowed to rotate,
the trajectory of the centre of the disc converges to a fluid particle trajectory.

Two related problems are the motion of the fluid-rigid body in a two-dimensional 
bounded domain, and the case of more than one rigid disc. 
In the case of a bounded domain, the existence of a global strong solution 
of a fluid-rigid body system is shown by Takahashi (2003) assuming that the rigid body does not touch the boundary.
With the same assumption, it seems that a similar approach as the one presented here
(but more technical since the change of coordinates would be more complicated)
can give the same convergence result. 
When there are several rigid bodies moving with the fluid, which is the case in real experiments, 
it is known that at least one weak solution exists when the radius of rigid bodies are fixed 
(Desjardins \& Esteban 1999; Feireisl 2002; Grandmont \& Maday 2000; San Martin et al 2002), but to our knowledge,
the uniqueness of the solution is not known. 

The analysis here goes some way towards justifying 
the use of tracer particles for finding Lagrangian paths of fluid flow, 
but we have unfortunately had to restrict 
to the case in which the particles are not allowed to rotate. 
Obtaining appropriate uniform bounds in the case that the particle can rotate
seems more 
challenging (see Remark \ref{r:omega}), and presents a very interesting open problem.

\begin{acknowledgement}

We are grateful to Yun Wang for pointing out a problem with 
the bound on $\|D^2 u\|_{L^2}$ in the original version of the paper, and to Igor Kukavika who suggested 
a way round this problem by assuming that the disc does not rotate.
MD was supported by a Warwick Postgraduate Research Fellowship, and as a postdoc by EPSRC and ERC.
JCR is supported by an EPSRC
Leadership Fellowship EP/G007470/1.

\end{acknowledgement}

\section*{References}{\footnotesize
\begin{description}

\item\textsc{Adams, R.A.} 1975, {\it Sobolev Spaces}. Academic Press, New York.

\item\textsc{Conca, C.; San Martin, J.; Tucsnak, M.} 2000, {\it Existence of solutions for the equations modelling the motion of a rigid body in a viscous fluid.} Comm. Partial Differential Equations, 25, 1019-1042.

\item\textsc{Desjardins, B.; Esteban, M. J.} 1999, {\it Existence of weak solutions for the motion of rigid bodies in a viscous fluid.} Arch. Ration. Mech. Anal.,  146, 59--71

\item\textsc{Desjardins, B.; Esteban, M. J.} 2000, {\it On weak solutions for fluid-rigid structure interaction: compressible and incompressible models.}  Comm. Partial Differential Equations  25,  no. 7-8, 1399--1413.

\item\textsc{Feireisl, E.} 2002, 
{\it On the motion of rigid bodies in a viscous fluid.} 
Mathematical theory in fluid mechanics (Paseky, 2001). Appl. Math.  47, 463--484.

\item\textsc{Galdi, G. P.} 1994, {\it An introduction to the mathematical theory of the Navier-Stokes equations. Vol. I. Linearized steady problems}. Springer Tracts in Natural Philosophy, 38. Springer-Verlag, New York.

\item\textsc{Galdi, G. P.; Silvestre, A. L.} 2002,
{\it Strong solutions to the problem of motion of a rigid body in a Navier-Stokes liquid under the action of prescribed forces and torques.} 
Nonlinear problems in mathematical physics and related topics, I, 121--144,
Int. Math. Ser. (N. Y.), 1, Kluwer/Plenum, New York. 

\item\textsc{Grandmont, C.; Maday, Y.} 2000,
{\it Existence for an unsteady fluid-structure interaction problem.}
M2AN Math. Model. Numer. Anal. 34, no. 3, 609--636. 

\item\textsc{Gunzburger, M. D.; Lee, H.-C.; Seregin, G. A.} 2000,
{\it Global existence of weak solutions for viscous incompressible flows around a moving rigid body in three dimensions.}
J. Math. Fluid Mech. 2, no. 3, 219--266. 

\item\textsc{Hoffmann, K.-H.; Starovoitov, V. N.} 1999,
{\it On a motion of a solid body in a viscous fluid. Two-dimensional case.}
Adv. Math. Sci. Appl. 9, no. 2, 633--648. 

\item\textsc{Iftimie, D.; Lopes Filho, M. C.; Nussenzveig Lopes, H. J.} 2006,
{\it Two-dimensional incompressible viscous flow around a small obstacle.}
Math. Ann. 336, no. 2, 449--489. 

\item\textsc{Judakov, N. V.} 1974,
{\it The solvability of the problem of the motion of a rigid body in a viscous incompressible fluid.} 
Dinamika Splo\v sn. Sredy Vyp. 18 Dinamika Zidkost. so Svobod. Granicami , 249--253, 255.

\item\textsc{Renardy, M.; Rogers, R. C.} 2004
{\it An introduction to partial differential equations.}
Second edition. Texts in Applied Mathematics, 13. Springer-Verlag, New York,.

\item\textsc{Robinson, J.C.} 2001, {\it Infinite-dimensional dynamical systems.} Cambridge Texts in Applied
Mathematics, Cambridge University Press, Cambridge.

\item\textsc{Robinson, J.C.} 2004, {\it A coupled particle-continuum model: well-posedness and the limit of zero radius}
 Proc. R. Soc. Lond. Ser. A Math. Phys. Eng. Sci.  460,  no. 2045, 1311--1334. 

\item\textsc{San Martín, J. A.; Starovoitov, V.; Tucsnak, M.} 2002
{\it Global weak solutions for the two-dimensional motion of several rigid bodies in an incompressible viscous fluid.}
Arch. Ration. Mech. Anal. 161, no. 2, 113--147. 

\item\textsc{Serre, D.} 1987, {\it Chute libre d'un solide dans un fluide visqueux incompressible. Existence}
 Japan J. Appl. Math.  4 ,  no. 1, 99--110. 

\item\textsc{Takahashi, T.} 2003, {\it Existence of strong solutions for the problem of a rigid-fluid system}, C.R. Acad. Sci. Paris, Ser. I 336, 453-458.

\item\textsc{Takahashi, T.; Tucsnak, M.} 2004, {\it Global strong solutions for the two-dimen\-sional motion of an infinite cylinder in a viscous fluid}, J. Math. Fluid Mech., 6, 53-77.

\item\textsc{Temam, R.} 1977, {\it Navier-Stokes Equations}. AMS Chelsea Publishing, Providence, RI.

\item\textsc{Ziemer, W. P.} 1989,
{\it Weakly differentiable functions.
Sobolev spaces and functions of bounded variation}. Graduate Texts in Mathematics, 120. Springer-Verlag, New York.

\end{description}
}

\address{Mathematics Institute \\University of Warwick \\Coventry CV4 7AL\\ UK.\\
 {\tt m.dashti@warwick.ac.uk} \\{\tt j.c.robinson@warwick.ac.uk} }

\end{document}